\documentclass[12pt]{article}

\usepackage{amssymb}
\usepackage{amsmath,amsthm,amsfonts}
\usepackage[latin1]{inputenc}
\usepackage{graphicx}
\usepackage{hyperref}
\usepackage{enumerate}
\usepackage{tikz}
\usepackage{color,url}
\usepackage{cite}
\usepackage{mathtools}

\newtheorem{theorem}{Theorem}[section]

\newtheorem{corollary}[theorem]{Corollary}

\newtheorem{lemma}[theorem]{Lemma}
\newtheorem{problem}{Problem}
\newtheorem{proposition}[theorem]{Proposition}

\newcommand{\cp}{\,\square\,}

\newcommand{\cF}{\mathcal{F}}
\DeclareMathOperator{\sab}{\mathrm{sab}}
\DeclareMathOperator{\height}{\mathrm{h}}

\begin{document}

\title{Maker-Breaker Sabotage Game}

\author{
Marko Jakovac 
$^{a,b,}$\thanks{\texttt{marko.jakovac@um.si}}
\and Sandi Klav\v zar $^{a,b,c,}$\thanks{\texttt{sandi.klavzar@fmf.uni-lj.si}}
\and Mirjana Mikala\v{c}ki $^{d,}$\thanks{\texttt{mima@dmi.uns.ac.rs}}
\and Andrej Taranenko $^{a,b,}$\thanks{\texttt{andrej.taranenko@um.si}}
}
\maketitle

\begin{center}
$^a$ Faculty of Natural Sciences and Mathematics, University of Maribor, Slovenia \\
\medskip
$^b$ Institute of Mathematics, Physics and Mechanics, Ljubljana, Slovenia\\
\medskip
$^c$ Faculty of Mathematics and Physics, University of Ljubljana, Slovenia\\
\medskip
$^d$ Department of Mathematics and Informatics, Faculty of Sciences, University of Novi Sad, Serbia\\
	\medskip
\end{center}

\begin{abstract}
The Maker-Breaker sabotage game is played on a graph $G$ by Runner and Blocker. They play in turns, Runner first moves along a not yet traversed edge from her current position, afterwards Blocker removes one edge. The goal of Runner is to visit as many vertices of $G$ as possible, Blocker's goal is opposite. Assuming that both players use optimal strategies, the number of vertices visited by Runner determines an invariant called the sabotage number ${\rm sab}(G)$ of $G$. 
A formula for the sabotage number of an arbitrary tree is proved which can be evaluated in polynomial time. For a unicyclic graph $G$ it is proved that $\sab(G)\in \{\sab^-(G), \sab^-(G)+1\}$, where $\sab^-(G)$ is the lower sabotage number of $G$. The sabotage number of a bridgeless subcubic graph is sharply bounded from the above by the maximum girth. The sabotage number is also bounded for complete bipartite graphs and generalized Sierpi\'nski graphs, and determined exactly in some special cases. 
\end{abstract}

\noindent
{\bf Keywords:} sabotage game; Maker-Breaker positional game; tree; subcubic graph; complete bipartite graph; Sierpi\'nski graph   \\

\noindent
{\bf AMS Subj.\ Class.\ (2020)}: 05C57

\section{Introduction}


Two decades ago, van Benthem introduced sabotage games~\cite{bentham-2005}, which involve two players named Runner and Blocker. The aim of Runner is to travel between two given vertices in a network, while the goal of Blocker is to prevent Runner from achieving her goal. The tool available to Blocker for this purpose is to remove (or destroy) edges of the network. For more information on these games, see~\cite{bentham-2026, kvasov-2016, mierzewski-2026, raju-2026} and the references therein. It should be noted that the corresponding sabotage modal logic was also introduced in the seminal  article~\cite{bentham-2005}, which has led to numerous studies, see, for example,~\cite{abou-2024, aucher-2018}.

Maker-Breaker games are the most studied representatives of positional games---combinatorial games of complete information played by two players that cover a wide range of games, ranging from popular games like Tic-Tac-Toe or Hex, to purely abstract games played on various structures. In Maker-Breaker games, given a finite board $X$ and a family of winning sets $\cF \subseteq 2^X$, the players, called \emph{Maker} and \emph{Breaker}, take turns claiming the elements of $X$, one element per turn, until all of them are claimed. Maker wins if, by the end of the game, she has claimed all the elements of some $f\in \cF$, and Breaker wins otherwise. No draw is possible. The study of these games started with the seminal papers~\cite{chvatal-1978,erdos-1973} and has since attracted great attention, motivating the study of Positional games as a whole with various modifications of the rules. For a comprehensive overview, we refer the reader to the books~\cite{beck-2008,hefetz-2014}. Over time, many restrictions have been applied to how Maker can play (see, e.g.,~\cite{chvatal-1978,clemens-2021,espig-2015,london-2018,seress-1999}) in order to give Breaker more power, since in many standard graph games it is very easy for Maker to win the game. Also, the quantitative version of Maker-Breaker games was introduced in~\cite{dowden-2019}, and further studied in~\cite{boriboon-2022,raty-2020} where Maker, called \textit{Toucher}, aims at touching as many vertices as possible by claiming edges, while Breaker's goal was opposite, i.e.\ to isolate as many vertices possible. 

In this paper, we introduce and study the following Maker-Breaker sabotage game. We are given a connected graph $G = (V(G), E(G))$ with at least one edge, whose edge set serves as the board of the game, and two players, Runner and Blocker, with Runner taking the role of Maker and Blocker taking the role of Breaker. Runner starts the game by moving along a selected edge. After that, the two players play in turns. When it is Blocker's turn, he removes (deletes) one edge of the graph. When it is Runner's turn, she moves along an available edge incident to her current vertex that she has not traversed before. The goal of Runner is to visit as many vertices of $G$ as possible, while the goal of Blocker is opposite: to minimize the number of vertices Runner visits. If both players play optimally, the number of vertices visited by Runner is a graph invariant. We call it the \emph{sabotage number} of $G$ and denote it by $\sab(G)$. As the Maker-Breaker sabotage game is the unique game studied here, for this paper we simplify the terminology and briefly call it the {\em sabotage game}. 

In the above-defined sabotage game, the goal of Runner is effectively to create a trail (a walk in which no edge is repeated) that is as long as possible. Positional games focusing on the step-by-step formation of trails have been studied previously under different rule sets. For instance, our game shares underlying concepts with Erd\H{o}s's Eulerian trail game~\cite{seress-1999}. Similarly, feedback games have recently garnered attention, having been investigated on Eulerian graphs~\cite{matsumoto-2023} and on 3-chromatic Eulerian triangulations of surfaces~\cite{higashi-2024}. While these games also involve navigating a graph to create a walk where no edge is repeated, the unique mechanism of the sabotage game---where Blocker actively deletes edges from the board rather than moving a shared token---introduces a fundamentally different dynamic.

We proceed as follows. In the next section, we list some definitions, determine the sabotage number for a few simple graph classes, characterize the triangle-free graphs with the sabotage number three, and introduce the lower sabotage number $\sab^-$. In Section~\ref{sec:trees}, we give a formula for the sabotage number of an arbitrary tree, which can be evaluated in polynomial time. We also prove that if $G$ is a unicyclic graph, then $\sab(G)\in \{\sab^-(G), \sab^-(G)+1\}$. In Section~\ref{sec:subcubic}, we bound from above the sabotage number of a bridgeless subcubic graph by its maximum edge girth of and demonstrate the sharpness of the bound. In Section~\ref{sec:additional}, we bound from above $\sab(K_{n,m})$, determine the sabotage number of classical Sierpi\'nski graphs, and bound it for generalized Sierpi\'nski graphs.  

\section{Preliminaries}
\label{sec:preliminaries}

In this section, we first provide a few additional definitions that we need. After that, we determine the sabotage number for some basic classes of graphs, characterize triangle-free graphs with the sabotage number equal to $3$, and introduce the lower sabotage number. For any graph theoretic notions not explicitly defined here, see \cite{west-2001}.

If $T$ is a tree and $u \in V(T)$, then by $T_u$ we denote the tree $T$ rooted at $u$. The height of a rooted tree $T_u$, denoted $\height(T_u)$, is the length of the longest path from the root $u$ to a leaf of $T_u$. A binary tree is a rooted tree in which each vertex has at most two children and each child of a vertex is designated as its left or right child. A complete binary tree of height $h$ is a binary tree in which all leaves are at distance $h$ from the root and every non-leaf vertex has exactly two children.

We will adopt the following convention. An edge $\{u,v\}$ will be shortly written as $uv$. Moreover, when such an edge represents an edge traversed by Runner in that direction, the first vertex in $uv$ is the first vertex traversed by Runner. 

For a statement $S$, we will use the Iverson bracket $[S]$, which has a value of 1 if $S$ is true and 0 otherwise. For a positive integer $k$, we use the notations $[k] = \{1,\dots, k\}$ and $[k]_0 = \{0,1,\dots,k-1\}$.  

We next determine the sabotage number of the following standard graph classes. 

\begin{proposition}\label{prp:path-cycle-complete}
If $G$ is a path, a cycle, or a complete graph, then 
\[
\sab(G) = \begin{dcases}
            2; & G \text{ is a path on } n\geq 2 \text{ vertices,} \\
            2; & G \text{ is a cycle on } n\geq 3 \text{ vertices,} \\
            n-1; & G \text{ is a complete graph on } n\geq 2 \text{ vertices.} \\
          \end{dcases}
\]
\end{proposition}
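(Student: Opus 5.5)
Each of the three cases splits into a lower bound, given by a Runner strategy, and an upper bound, given by a Blocker strategy. Recall the rules: Runner's first move traverses an edge, so she visits at least two vertices whenever $G$ has an edge. After that, Blocker deletes one edge before each subsequent Runner move.

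\emph{Paths and cycles.} The lower bound $\sab(G)\ge 2$ is immediate from Runner's first move. For the upper bound, suppose Runner's first move is $uv$. In a path, $v$ has at most one untraversed incident edge. In a cycle, $v$ has exactly one untraversed incident edge, since $uv$ has been used and edges cannot be reused. In either case Blocker deletes that edge, and Runner is stuck at $v$ with exactly two visited vertices. Hence $\sab(G)=2$. The case $n=2$ for the path is trivial.

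\emph{Complete graphs, upper bound.} Blocker's strategy is to always delete the edge from Runner's current position to a chosen unvisited vertex $w$. Fix $w$ after Runner's first move as any vertex not among the two visited ones; if $n=2$, the bound $n-1=1$ would be wrong, so I need to recheck the small cases below. Each time Runner arrives at a new vertex $x$, Blocker deletes $xw$ if it is still present. Then Runner can never step onto $w$, because any edge into $w$ from her current vertex has already been deleted. Therefore she visits at most $n-1$ vertices.

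Check of $K_2$: Runner visits both vertices, so $\sab(K_2)=2\neq n-1$. Either the statement intends $n\ge 3$ for the complete-graph line, or $K_2$ falls under the path case. I will note this and treat $n\ge3$. Also $K_3=C_3$ gives $2=n-1$, which is consistent.

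\emph{Complete graphs, lower bound $\sab(K_n)\ge n-1$.} This is the main obstacle. Runner must keep moving to new vertices while Blocker deletes one edge per turn. The first plan is the greedy strategy: always move to an unvisited vertex if possible. Suppose Runner has visited $k$ vertices and is at $x$. Blocker has deleted $k-2$ edges so far, one after each of Runner's moves except the latest.
\begin{itemize}
\item For Runner to be stuck with respect to new vertices, all $n-k$ edges from $x$ to unvisited vertices must be deleted.
\item Only the one deletion made since Runner's arrival at $x$, together with the earlier deletions, can have targeted these edges. Edges at $x$ deleted earlier were "wasted" on a vertex Runner had not yet reached.
\end{itemize}
Counting this naively gives only about $n/2$, so the naive count is not enough and a smarter rule is needed. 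Runner should move to the unvisited vertex $y$ that maximizes the number of surviving edges from $y$ to other unvisited vertices. Inductively she maintains the invariant that her current vertex has at least one surviving edge to an unvisited vertex until $k=n-1$. Blocker's deletions are spread over the unvisited set $U$, so an averaging argument over $U$ should show that some $y\in U$ has lost at most one edge within $U\cup\{x\}$ in the relevant sense. If averaging fails, the fallback is a sharper potential argument. Runner can also reuse visited vertices as intermediate hops, since most edges among visited vertices remain untraversed: $K_n$ has $\binom n2$ edges while only about $2n$ are used or deleted. So if Runner is blocked from $U$ at $x$, she detours through a visited vertex that still has an edge to $U$. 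This detour argument is what I expect to make the bound $n-1$ work, with the small cases $n\le 4$ checked by hand.
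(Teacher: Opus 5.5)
Your arguments for paths and cycles, and your Blocker strategy for $\sab(K_n)\le n-1$, match the paper. One wording fix: Blocker must delete $xw$ whenever Runner arrives at \emph{any} vertex $x$, not only a new one, because if she returns to her starting vertex, its edge to $w$ has never been deleted. You are also right that the complete-graph line fails for $n=2$: $\sab(K_2)=2$, as the paper's own proof notes, so that case should read $n\ge 3$.

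The genuine gap is the lower bound $\sab(K_n)\ge n-1$, which you do not prove. You list candidate methods (``should show'', ``if averaging fails, the fallback is''), and the invariant you want averaging to deliver is false. Take $n\ge 7$. Blocker deletes $pq$, then $pr$, then $qr$, where $p,q,r$ are vertices your rule has not entered; since it prefers unvisited vertices with more surviving edges into $U$, it avoids them while other unvisited vertices remain. Afterwards he deletes only edges between visited vertices. Once $p,q,r$ are the only unvisited vertices, whichever one Runner enters has no surviving edge to the other two, so she is stuck after $n-2$ vertices. Detours are therefore unavoidable, and they are the whole difficulty. Your heuristic ``$\binom n2$ edges versus about $2n$ used or deleted'' does not handle them. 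What matters is the set of edges between visited vertices and $U$; when $|U|=2$ there are only about $2n$ of these, the same order as Blocker's budget. Moreover, a detour vertex with a single surviving edge into $U$ is useless, since Blocker deletes that edge while Runner walks there. Re-entering a vertex already cut off from $U$ also hands Blocker a free deletion.

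What is missing is a precise detour rule with an exact count; the paper's route is also greedy moves plus backtracking with a count of deletions. Use this rule: if all edges from the current vertex $x$ to $U$ are deleted, move to a visited $z$ with $xz$ available and at least two surviving edges into $U$. One deletion then cannot stop the next forward move.

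Under this rule Runner is stuck only at freshly entered vertices $x_1,\dots,x_k$, and she never returns to them. At the $i$-th event all $m_i=|U_i|\ge 2$ edges from $x_i$ to $U_i$ are deleted, and these edge sets are pairwise disjoint. Suppose that at the $k$-th event no valid $z$ exists. Consider the $n-|U_k|-k-1$ visited vertices other than the $x_i$ and the predecessor of $x_k$. Each contributes a further deleted edge: either $x_kz$, which is untraversed since only the arrival edge at $x_k$ is, or one of at least $|U_k|-1\ge 1$ deleted edges from $z$ into $U_k$. All of these edges are distinct from one another and from the earlier ones.

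So Blocker needs at least $2k+(n-|U_k|-k-1)=n-|U_k|+k-1$ deletions. He has made only one per Runner move, namely $(n-|U_k|-1)+(k-1)=n-|U_k|+k-2$, a contradiction. Note that the paper's sketch lets the backtracking edge be arbitrary. The count closes only with a choice like the one above; otherwise Runner may wander among visited vertices already cut off from $U$, each move giving Blocker a free deletion.
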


\begin{proof}
    The proofs for cycles and paths are trivial. For complete graphs, the cases $n=2$ and $n=3$ correspond to $P_2$ and $C_3$, respectively, giving $\sab(K_2) = 2$ and $\sab(K_3) = 2$. Assume $n \geq 4$. 

    We first prove the upper bound, $\sab(K_n) \leq n-1$. Blocker's strategy is to choose a specific vertex $u$ distinct from the two vertices of Runner's initial edge. In every subsequent turn, whenever Runner moves to a vertex $w$, Blocker deletes the edge $wu$, if it exists, or any other edge otherwise. Consequently, Runner can never visit $u$.

    To prove the lower bound, $\sab(K_n) \geq n-1$, we provide a strategy for Runner: at each step, if there is an available edge to an unvisited vertex, Runner traverses it; otherwise, she traverses any available edge to a previously visited vertex (such a move is called a \emph{backtracking} move).

    Suppose towards a contradiction that Runner follows this strategy and has no available edges to move from a vertex $v$ after visiting $k < n-1$ vertices (she is trapped at $v$). Let $U$ be the set of unvisited vertices ($|U|=n-k \geq 2$). 

    Let us examine the very first time in the game Runner was forced to make a backtracking move. Suppose this occurs at a vertex $x$ after she had visited $j$ distinct vertices ($j \le k \le n-2$). So far, Runner has traversed $j-1$ edges (forming a path) and Blocker has deleted $j-1$ edges. To force Runner to backtrack at $x$, Blocker must have deleted all $n-j$ edges connecting $x$ to the $n-j$ currently unvisited vertices. From now on, every time Runner makes a backtracking move to a previously visited vertex $y$ and is forced to backtrack again (or gets permanently trapped), Blocker must have deleted all available edges from $y$ to the set of unvisited vertices $U$. Because $|U| \ge 2$, it costs Blocker at least $2$ deletions strictly on edges incident to Runner's current vertex to prevent her from visiting an unvisited vertex. However, each backtracking move Runner makes only grants Blocker $1$ turn. Because Blocker started with $j-1$ deletions at Runner's first backtrack, he cannot have made enough deletions on other visited vertices she can backtrack to, to prevent Runner from visiting an unvisited vertex. Hence, Runner will eventually find an edge incident to an unvisited vertex to traverse, a contradiction that she is trapped at $k \le n-2$. Thus, $\sab(K_n) = n-1$.
\end{proof}

\begin{lemma}\label{lem:subgraph}
If $H$ is a connected subgraph of $G$, then $\sab(H)\le \sab(G)$.
\end{lemma}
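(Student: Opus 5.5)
We prove the inequality by strategy transfer. Runner plays on $G$ while simulating an optimal game on $H$, and we show she visits at least $\sab(H)$ vertices of $G$. Since $\sab(G)$ is the outcome under optimal play, it suffices to give Runner a strategy on $G$ that guarantees $\sab(H)$ visited vertices against every Blocker strategy on $G$.

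Fix an optimal Runner strategy $\sigma$ for the game on $H$, guaranteeing at least $\sab(H)$ visited vertices. Runner plays on $G$ and keeps a simulated game on $H$ alongside the real one, with two invariants:
\begin{enumerate}[(i)]
\item Runner's position and traversed trail are the same in both games, and all her moves use edges of $H$;
\item every edge of $H$ deleted in the real game has also been deleted in the simulated game.
\end{enumerate}

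The simulation runs as follows.
\begin{enumerate}[(a)]
\item Runner's first move in $G$ is the first edge prescribed by $\sigma$; this edge lies in $H$.
\item When Blocker deletes an edge $e$ of $G$ with $e\in E(H)$, Runner treats $e$ as Blocker's move in the simulated game.
\item When Blocker deletes an edge $e\notin E(H)$, Runner regards Blocker in $H$ as having deleted an arbitrary still-present edge of $H$. If no such edge remains, the simulated Blocker simply passes; extra deletions or passes only help Blocker in $H$, and $\sigma$ must cope with them.
\item After each Blocker move, Runner makes the move that $\sigma$ prescribes in the simulated game.
\end{enumerate}

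By invariant (ii), any edge of $H$ that is available in the simulated game is also present in the real game. It is also untraversed, by invariant (i), so the move from step (d) is legal in $G$. Consequently the real game lasts at least as long as the simulated one, and Runner visits in $G$ every vertex she visits in $H$, which is at least $\sab(H)$.

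The main obstacle is making step (c) rigorous. It needs a monotonicity fact: $\sigma$, being optimal, still guarantees $\sab(H)$ when the simulated Blocker makes choices that are arbitrary but legal. Giving Blocker in $H$ extra or different deletions is just another Blocker strategy in $H$, so the guarantee of $\sigma$ covers it. The one delicate point is the case where no edge of $H$ remains to delete. The cleanest fix is to note that Runner's trail then cannot be extended in $H$ anyway, so the simulation has already ended and $\sab(H)$ vertices have been reached. Alternatively, one can allow passing, since a Blocker pass is never better for Blocker than some deletion.

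The game on $G$ may continue after the simulation ends, but this can only add visited vertices. Hence $\sab(G)\ge \sab(H)$.
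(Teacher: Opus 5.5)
Your proposal is correct and is essentially the paper's argument: Runner plays an optimal strategy for $H$ inside $G$, and any deletion outside $E(H)$ is reinterpreted as an arbitrary deletion inside $H$, which the optimal strategy must withstand anyway. One detail is left implicit both in your proposal and in the paper: in step (b) the real Blocker may delete an edge of $H$ that was already deleted fictitiously in step (c), so that deletion is not a legal simulated move; in that case substitute another still-present edge of $H$ (or note that the simulated game is over), which keeps invariant (ii) and the legality of the simulated play.
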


\begin{proof}
Let Runner use an optimal strategy on $H$. We show that the same strategy also guarantees that Runner visits at least $\sab(H)$ vertices when the game is played on $G$.

Runner starts in $G$ exactly as she would start in $H$, and as long as the game involves only edges of $H$, she makes the same moves as prescribed by her optimal strategy on $H$. Assume now that at some stage of the game Blocker removes an edge from $E(G)\setminus E(H)$. Such a move has no effect on the availability of edges of $H$. From the point of view of Runner's strategy on $H$, this is even better than an arbitrary move of Blocker inside $H$, since no edge of $H$ has been removed. Thus, Runner pretends Blocker removed an arbitrary edge of $H$ and can still make the move prescribed by her strategy on $H$.

Consequently, regardless of how Blocker plays in $G$, Runner can simulate her optimal strategy from $H$ inside the subgraph $H$. Therefore, she visits at least $\sab(H)$ vertices of $H$, and hence at least $\sab(H)$ vertices of $G$. It follows that $\sab(G)\ge \sab(H)$ as claimed.    
\end{proof}

Note that the only connected graphs $G$ with $\sab(G) = 2$ are paths with at least two vertices and cycles. Indeed, if $G$ contains a vertex $w$ of degree at least three, then traversing an edge from a neighbor of $w$ to $w$ as the start of the game guarantees Runner to visit at least three vertices.  Graphs $G$ with $\sab(G) = 3$ can be characterized among triangle-free graphs as follows. 

\begin{proposition}
\label{prp:delta=3-triangle-free}
Let $G$ be a connected, triangle-free graph of order at least four. Then $\sab(G) = 3$ if and only if $\Delta(G) \ge 3$ and $G$ contains no subgraph isomorphic to one of the graphs from Fig.~\ref{fig:forbidden}.  
\end{proposition}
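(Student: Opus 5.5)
The plan is to prove the two implications separately. The lower bound $\sab(G)\ge 3$ comes for free from the remark preceding the statement: a vertex $w$ with $\deg(w)\ge 3$ lets Runner reach three vertices, and $\Delta(G)\le 2$ forces $\sab(G)=2$. So the whole proposition reduces to the following claim. For a connected triangle-free $G$ with $\Delta(G)\ge 3$ and order at least four, $\sab(G)\ge 4$ if and only if $G$ contains one of the graphs of Fig.~\ref{fig:forbidden}.

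For the direction ``contains a forbidden graph $\Rightarrow \sab(G)\ge 4$'', I will use Lemma~\ref{lem:subgraph}. It then suffices to check that each graph $F$ in Fig.~\ref{fig:forbidden} satisfies $\sab(F)\ge 4$, since any connected $G\supseteq F$ then has $\sab(G)\ge\sab(F)\ge 4$. For each $F$ I will exhibit an explicit Runner strategy, which is a depth-two game tree:
\begin{enumerate}[(i)]
\item a starting edge $uv$;
\item for every possible deletion by Blocker, a second move $vw$;
\item for every second deletion, a third move to a fourth vertex.
\end{enumerate}
Triangle-freeness helps here, because the third vertex $w$ is never adjacent to $u$. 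Hence every neighbor of $w$ other than $v$ is automatically new. Runner therefore wins as soon as she can reach some $w$ that still has at least one surviving edge to a vertex outside $\{u,v\}$ after Blocker's second deletion. The typical configurations are:
\begin{enumerate}[(a)]
\item a vertex $v$ with at least two neighbors $w$ of degree at least $3$, or enough branching that Blocker's first deletion cannot spoil all options;
\item a $4$-cycle through a vertex of degree $3$, where the cycle supplies two disjoint escape routes.
\end{enumerate}
These are the small checks, one per graph in the figure.

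For the converse I will give a Blocker strategy on a graph $G$ avoiding all graphs of the figure. Suppose Runner opens with $uv$. Blocker should delete an edge $vw$ at $v$ whose endpoint $w$ is the most dangerous one, meaning $w$ has a neighbor outside $\{u,v\}$. Suppose Runner then moves to some $w'$. Blocker deletes the unique edge from $w'$ to a new vertex, if there is one. Runner is then stuck at three vertices, because she cannot return to $v$ along the used edge, and the other neighbors of $w'$ are either new (just deleted) or equal to $v$.

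The key structural step is to show that, in the absence of the forbidden subgraphs, this is always possible. Concretely, I must show that for every directed edge $uv$ the following holds: among the neighbors $w\ne u$ of $v$, at most one has two or more neighbors outside $\{v\}$ (i.e.\ $\deg(w)\ge 3$ or a further branching), and all the others have at most one neighbor outside $\{v\}$. Each violation of this condition must be shown to produce one of the figure's graphs as a subgraph, using triangle-freeness to control the adjacencies among $u$, $v$ and the $w$'s.

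I expect this case analysis to be the main obstacle. The difficulty is that vertices may coincide, for instance the outer neighbors of two $w$'s may be the same vertex, creating $4$-cycles. Each coincidence pattern has to be matched to a specific forbidden graph. My first approach would be to enumerate on $\deg(v)$, which is $2$, $3$ or at least $4$. Within each case I would enumerate whether the outer neighbors of the $w$'s coincide. The aim is to show that every configuration in which Blocker's two deletions do not suffice contains one of the graphs of Fig.~\ref{fig:forbidden}.
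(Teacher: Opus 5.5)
Your plan is correct and is essentially the paper's argument. Both directions rest on Lemma~\ref{lem:subgraph} and on the observation that, after Runner opens with $uv$, she can reach a fourth vertex exactly when $v$ has two neighbors $w_1,w_2\ne u$ of degree at least $3$; your ``dangerous'' neighbor should be defined this way, not merely as having a neighbor outside $\{u,v\}$.

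The case analysis you expect to be the main obstacle is immediate and needs no split on $\deg(v)$. Choose two neighbors of each $w_i$ other than $v$. Triangle-freeness keeps them outside $\{u,v,w_1,w_2\}$. According as these four choices give $4$, $3$ or $2$ distinct vertices, you obtain exactly the left, middle or right graph of Fig.~\ref{fig:forbidden}.
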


\begin{figure}[ht!]
    \centering
\begin{tikzpicture}[
    node style/.style={circle, draw, minimum size=8pt, inner sep=0pt, thick},
    edge style/.style={thick}, 
    scale=0.9
]

    \begin{scope}[xshift=0cm]
        \node[node style] (G1_1) at (0, 0) {};
        \node[node style] (G1_2) at (0, -1.5) {};
        \node[node style] (G1_3) at (-1, -3) {};
        \node[node style] (G1_4) at (1, -3) {};
        \node[node style] (G1_5) at (-1.5, -4.5) {};
        \node[node style] (G1_6) at (-0.5, -4.5) {};
        \node[node style] (G1_7) at (0.5, -4.5) {};
        \node[node style] (G1_8) at (1.5, -4.5) {};

        \draw[edge style] (G1_1) -- (G1_2);
        \draw[edge style] (G1_2) -- (G1_3);
        \draw[edge style] (G1_2) -- (G1_4);
        \draw[edge style] (G1_3) -- (G1_5);
        \draw[edge style] (G1_3) -- (G1_6);
        \draw[edge style] (G1_4) -- (G1_7);
        \draw[edge style] (G1_4) -- (G1_8);
    \end{scope}

    \begin{scope}[xshift=4.5cm]
        \node[node style] (G2_1) at (0, 0) {};
        \node[node style] (G2_2) at (0, -1.5) {};
        \node[node style] (G2_3) at (-1, -3) {};
        \node[node style] (G2_4) at (1, -3) {};
        \node[node style] (G2_5) at (-1.5, -4.5) {};
        \node[node style] (G2_6) at (0, -4.5) {};
        \node[node style] (G2_7) at (1.5, -4.5) {};

        \draw[edge style] (G2_1) -- (G2_2);
        \draw[edge style] (G2_2) -- (G2_3);
        \draw[edge style] (G2_2) -- (G2_4);
        \draw[edge style] (G2_3) -- (G2_5);
        \draw[edge style] (G2_3) -- (G2_6);
        \draw[edge style] (G2_4) -- (G2_6);
        \draw[edge style] (G2_4) -- (G2_7);
    \end{scope}

    \begin{scope}[xshift=9cm]
        \node[node style] (G3_1) at (0, 0) {};
        \node[node style] (G3_2) at (0, -1.5) {};
        \node[node style] (G3_3) at (-1, -3) {};
        \node[node style] (G3_4) at (1, -3) {};
        \node[node style] (G3_5) at (-1, -4.5) {};
        \node[node style] (G3_6) at (1, -4.5) {};

        \draw[edge style] (G3_1) -- (G3_2);
        \draw[edge style] (G3_2) -- (G3_3);
        \draw[edge style] (G3_2) -- (G3_4);
        \draw[edge style] (G3_3) -- (G3_5);
        \draw[edge style] (G3_3) -- (G3_6);
        \draw[edge style] (G3_4) -- (G3_5);
        \draw[edge style] (G3_4) -- (G3_6);
    \end{scope}
\end{tikzpicture}
\caption{Forbidden subgraphs in Proposition~\ref{prp:delta=3-triangle-free}}
\label{fig:forbidden}
\end{figure}

\begin{proof}
Assume first that $\sab(G) = 3$. Then, as already observed, $\Delta(G)\ge 3$. Moreover, in each of the graphs from Fig.~\ref{fig:forbidden}, Runner has a strategy to visit four vertices by first moving along the edge between the top vertex of a corresponding graph and its (unique) neighbor. Hence, if $G$ contained any of the three graphs from the figure as a subgraph, Lemma~\ref{lem:subgraph} would imply that $\sab(G)\ge 4$.

Assume now that $\Delta(G) \ge 3$ and that $G$ contains no subgraph isomorphic to one of the graphs from Fig.~\ref{fig:forbidden}. Since $\Delta(G) \ge 3$, we have $\sab(G)\ge 3$. By way of contradiction, suppose that $\sab (G)\ge 4$. Let $uv$ be the first edge traversed by Runner which enables her to visit at least four vertices during the sabotage game. Then $\deg_G(v)\ge 3$; otherwise, $v$ has at most one other incident edge, which Blocker immediately removes, trapping Runner at $v$. Let $v_1$ and $v_2$ be two neighbors of $v$ different from $u$. As we have supposed that $\sab (G)\ge 4$, and since Blocker can remove $vv_1$ or $vv_2$ in his first move, each of $v_1$ and $v_2$ must have two additional neighbors different from $v$. Let $v_1'$ and $v_1''$ be two such neighbors of $v_1$, and $v_2'$ and $v_2''$ be two such neighbors of $v_2$. Since $G$ is triangle-free, $\{v_1', v_1''\} \cap \{v_2, u\} = \emptyset$ and $\{v_2', v_2''\} \cap \{v_1, u\} = \emptyset$. Setting $t = |\{v_1', v_1'', v_2', v_2''\}|$, we have $2\le t\le 4$. Depending on the value of $t$, we find one of the graphs from Fig.~\ref{fig:forbidden} as a subgraph of $G$, a contradiction. We can conclude that $\sab(G) = 3$.
\end{proof}

To complete the section, we introduce the following relevant concept for the sabotage game. Let $G$ be a connected graph. The \emph{lower sabotage number} of $G$ is defined by 
$$\sab^{-}(G) = \max\{\sab(T):\ T \text{ is a spanning tree of } G \}\,.$$
The name of this concept is justified by the following immediate consequence of Lemma~\ref{lem:subgraph}. 

\begin{corollary}
\label{cor:sab-versus-sab-minus}
If $G$ is a connected graph, then $\sab^{-}(G) \le \sab(G)$.
\end{corollary}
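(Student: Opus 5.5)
This follows directly from Lemma~\ref{lem:subgraph} together with the definition of $\sab^-(G)$. Since $G$ is connected, it has at least one spanning tree, so the maximum defining $\sab^-(G)$ is over a nonempty finite set and is attained. I will fix a spanning tree $T^*$ of $G$ with $\sab(T^*) = \sab^-(G)$.

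A spanning tree $T^*$ is a connected subgraph of $G$. Applying Lemma~\ref{lem:subgraph} with $H = T^*$ therefore gives $\sab(T^*) \le \sab(G)$, and hence $\sab^-(G) = \sab(T^*) \le \sab(G)$.

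The one point to check is that $\sab(T^*)$ is defined, since the game is set up on connected graphs with at least one edge. If $G$ has at least two vertices, then $T^*$ has at least one edge, so the game on $T^*$ makes sense and the argument above applies. If $G = K_1$, the game is not defined, and the statement is vacuous or excluded by the standing assumption that $G$ has at least one edge. There is no real obstacle here; the whole content lies in Lemma~\ref{lem:subgraph}.
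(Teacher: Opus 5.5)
Your proposal is correct and follows the paper exactly: the paper presents this as an immediate consequence of Lemma~\ref{lem:subgraph}, applied to a spanning tree attaining the maximum in the definition of $\sab^{-}(G)$. Your remark about the degenerate case $G=K_1$ is a harmless extra; the paper avoids that case by its standing assumption that $G$ has at least one edge.
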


Clearly, if $T$ is a tree, then $\sab^{-}(T) = \sab(T)$. On the other hand, as will be demonstrated in Proposition~\ref{prp:complete-graphs-lower}, $\sab^{-}(G)$ can be arbitrarily smaller than $\sab(G)$.

\section{Trees and unicyclic graphs}
\label{sec:trees}

\begin{theorem}
\label{thm:trees}
If $T$ is a tree with at least three vertices, then 
\[
\sab(T)=\max\{\height(T_u) + 1 + [\deg_{T}(u)>2] : \ T_u \text{ complete binary subtree of } T\}.
\]
\end{theorem}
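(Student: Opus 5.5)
Since $T$ is a tree, Runner's trail is a path. Every time she arrives at a vertex $v$ from a neighbour $p$, she can continue only into the component of $T-v$ not containing $p$. So I would root everything at the direction of travel. For an edge $pv$, let $T^{p}_v$ be the branch of $T$ at $v$ away from $p$. Define $f_p(v)$ as the largest $k$ such that $T^{p}_v$ contains a complete binary subtree of height $k$ rooted at $v$, with $f_p(v)=0$ if $v$ has at most one child in $T^p_v$.

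The basic recursion is the following. If $v$ has children $c_1,\dots,c_r$ in $T^p_v$ with $f_v(c_1)\ge f_v(c_2)\ge\cdots$ and $r\ge 2$, then $f_p(v)=1+f_v(c_2)$. This uses the easy fact that a complete binary tree of height $k$ rooted at $c$ contains one of height $k-1$ rooted at $c$.

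\textbf{Key lemma.} Suppose Runner has just arrived at $v$ from $p$, and it is Blocker's turn. Then the number of further new vertices Runner visits under optimal play equals $f_p(v)$, provided no edge of $T^p_v$ has been deleted. For the upper bound this holds regardless of previous deletions. I would prove both inequalities by induction on $|T^p_v|$.

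\emph{Upper bound.} Blocker deletes $vc_1$. Runner then moves to some $c_i$ with $i\ge 2$, or stops. By induction she visits at most $1+f_v(c_i)\le 1+f_v(c_2)=f_p(v)$ further vertices. If $r\le 1$, Blocker deletes the unique forward edge (if any), and Runner is stuck.

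\emph{Lower bound.} Fix a complete binary subtree of height $k=f_p(v)$ rooted at $v$, with two branches of height $k-1$ rooted at children $c,c'$. Blocker's single deletion meets at most one of the two sets consisting of the edge $vc$ together with the edges of $T^v_c$, respectively $vc'$ together with $T^v_{c'}$. Runner moves into the untouched branch, which is again intact with Blocker to move. Induction gives $1+(k-1)=k$.

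The point I expect to need care is exactly this invariant: deletions elsewhere in the tree are irrelevant because Runner can never return. The statement "fresh branch, Blocker to move" is precisely what makes the induction go through.

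\textbf{Assembling the theorem.} If Runner's first move is $xy$, she has visited $2$ vertices, Blocker is to move, and nothing is deleted. So by the lemma
\[
\sab(T)=\max_{xy}\bigl(2+f_x(y)\bigr),
\]
the maximum taken over ordered pairs with $xy\in E(T)$. It remains to match this with the formula in Theorem~\ref{thm:trees}.

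($\le$) Take $x,y$ with $k=f_x(y)$. If $k\ge1$, the witnessing complete binary subtree $T_y$ of height $k$ avoids $x$, so $\deg_T(y)\ge 3$. This gives the term $k+1+1=2+f_x(y)$. If $k=0$, we have value $2$. Since $|V(T)|\ge3$, some vertex has degree at least two, giving a complete binary subtree of height $1$ and a term of at least $2$.

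($\ge$) Let $T_u$ be a complete binary subtree of height $h$. If $\deg_T(u)>2$, pick a neighbour $x$ of $u$ outside $T_u$; then $f_x(u)\ge h$, giving at least $h+2$. If $\deg_T(u)=2$, then $h\ge1$ and the whole tree near $u$ is its two children. Let Runner start along $uc$ for a child $c$. The branch $T^u_c$ contains the complete binary subtree of height $h-1$ rooted at $c$, so $f_u(c)\ge h-1$, and she gets at least $2+(h-1)=h+1$.

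Combining both directions yields the formula.
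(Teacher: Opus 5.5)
Your proof is correct and is essentially the paper's argument. Your $f_p(v)$ and key lemma play the role of the paper's branch value $g(y,T_{x\to y})$: Blocker cuts the edge to the best child, so the value is one plus the second-largest child value, and Runner survives by entering the half of a complete binary subtree that Blocker's deletion missed; the extra $+1$ comes from the same observation that the start vertex of the first edge is a third neighbour of the root, and you merely package both directions as the exact identity $\sab(T)=\max_{xy}(2+f_x(y))$. There is one trivial slip in the ($\ge$) matching: the claim ``$\deg_T(u)=2$ implies $h\ge 1$'' fails for $T_u=\{u\}$, and the case $\deg_T(u)=1$ is not mentioned, but all such terms equal $1\le 2\le \sab(T)$, so nothing breaks.
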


\begin{proof}
Set 
\[
M = \max\{ \height(T_u) + 1 + [\deg_T(u) > 2] : T_u \text{ complete binary subtree of } T \}.
\]
We will first show $\sab(T) \ge M$. To prove this, we must provide a strategy for Runner which guarantees a path of at least $M$ vertices. 

Let $u \in V(T)$ and $T_u$ be a complete binary subtree rooted at $u$ of height $h = \height(T_u)$ that achieves the maximum value $M$. We consider two cases based on the degree of $u$.

Assume $\deg_T(u) \le 2$. In this case, the formula evaluates to $h + 1$, and Runner uses the following strategy. Because $T_u$ is a complete binary subtree of height $h \ge 1$ (since $T$ has at least three vertices) and $\deg_T(u) \le 2$, $u$ has exactly two children and no other neighbors in $T$. Runner starts the game by selecting and moving along an edge from $u$ to one of its children, say $v$. Runner has now visited two vertices ($u$ and $v$), and $v$ is the root of a complete binary subtree of height $h-1$. 

Blocker then removes exactly one edge $e$. Hence, $e$ can belong to at most one of the two branches of $T_v$ containing the two complete binary subtrees of height $h-2$. Runner then moves to the root of the subtree to which $e$ does not belong. If $e$ is not in any of them, she can arbitrarily choose a branch. 

By repeating this process, namely always moving into the branch that does not contain the edge chosen by Blocker, Runner can successfully make $h$ total moves. Including the starting vertex $u$, Runner visits $h + 1$ vertices, satisfying the bound.

Next, assume that $\deg_T(u) > 2$. In this case, the formula evaluates to $h + 2$ and Runner uses the following strategy. Since $T_u$ contains exactly two edges incident to $u$, and $\deg_T(u) > 2$, there exists a vertex $w$ adjacent to $u$ such that $w\not\in V(T_u)$. Runner starts the game by moving along the edge $wu$. 

Blocker then removes exactly one edge $e$. As in the previous case, $e$ can belong to at most one of the two branches containing the two complete binary subtrees of height $h-1$. Runner then moves to the root of a complete binary subtree of height $h-1$ to which $e$ does not belong. 

Following the same line of thought as in the previous case, Runner can now move at least $h-1$ more times following this strategy. Since the total number of edges Runner traverses is at least $1 + 1 + (h-1)$, she visits at least $h + 2$ vertices.

By the above, $\sab(T) \ge M$. 

Now, let us prove that $\sab(T)\le M$. 

For any edge $xy \in E(T)$, let $T_{x \to y}$ denote the connected component of $T - xy$ containing $y$. Let $g(y, T_{x \to y})$ be the maximum number of additional edges Runner can guarantee to traverse within $T_{x \to y}$ starting from $y$. Because Runner maximizes her path length and Blocker minimizes it by removing exactly one adjacent edge. Let $N^*(y)$ be the set of neighbors of $y$ in $T_{x \to y}$. We define the multiset of available subtree values:
$$S_y = \{ 1 + g(z, T_{y \to z}):\ z \in N^*(y) \}.$$
If $|S_y| \le 1$, Blocker removes the only available edge (if any), leaving Runner with no moves. Thus, $g(y, T_{x \to y}) = 0$. 
If $|S_y| \ge 2$, Blocker removes the edge corresponding to the maximum of $S_y$. Runner optimally chooses the edge corresponding to the remaining maximum. Therefore, $g(y, T_{x \to y})$ is at most the second-largest value in the multiset $S_y$.

We claim that if $g(y, T_{x \to y}) \ge d$, then $T_{x \to y}$ contains a complete binary subtree of height $d$ rooted at $y$. We prove this by induction on $d$. The base case $d=0$ holds trivially, as $y$ forms a complete binary subtree of height $0$. Assume the claim holds for $d-1$. If $g(y, T_{x \to y}) \ge d \ge 1$, then by definition, the second-largest value in $S_y$ is at least $d$. This mathematically implies that there exist at least two distinct vertices $z_1, z_2 \in N^*(y)$ such that $1 + g(z_i, T_{y \to z_i}) \ge d$, which simplifies to $g(z_i, T_{y \to z_i}) \ge d-1$ for all $i \in \{1, 2\}$. By the inductive hypothesis, $T_{y \to z_1}$ and $T_{y \to z_2}$ contain complete binary subtrees of height $d-1$ rooted at $z_1$ and $z_2$, respectively. Because $T$ is a tree, these subtrees are disjoint. Moreover, $z_1$ and $z_2$ are both neighbors of $y$, hence $y$ is the root of a complete binary subtree of height $d$, proving the claim.

Let $k = \sab(T)$. Since $T$ has at least $3$ vertices, it contains $P_3$ as a subgraph. The center of $P_3$ roots a complete binary subtree of height $1$ and has degree at least $2$. Thus, $M \ge 1 + 1 + 0 = 2$. If $k \le 2$, then $\sab(T) \le M$ trivially holds. 
Assume $k \ge 3$. Let Runner's optimal first move be along the edge $v_1v_2$. The remainder of the game is confined to $T_{v_1 \to v_2}$. To visit exactly $k$ vertices, Runner traverses a total of $k-1$ edges. After traversing $v_1v_2$, she must traverse exactly $k-2$ additional edges. Therefore: 
$g(v_2, T_{v_1 \to v_2}) \ge k-2$.
As proven above, $T_{v_1 \to v_2}$ contains a complete binary subtree of height at least $k-2$ rooted at $v_2$. Let this subtree be denoted $B$. Thus, $\height(B) \ge k-2$. 
Because $k \ge 3$, we have $k-2 \ge 1$. Consequently, $v_2$ connects to at least two children within $B \subseteq T_{v_1 \to v_2}$. Furthermore, in the original tree $T$, $v_2$ is also adjacent to $v_1 \notin V(T_{v_1 \to v_2})$, hence $\deg_T(v_2) \ge 3$.
By the definition of $M$, evaluating the specific complete binary subtree $B$ yields:
$$M \ge \height(B) + 1 + [\deg_T(v_2) > 2] \ge (k-2) + 1 + 1 \ge k.$$
Thus, $\sab(T) \le M$, completing the proof.
\end{proof}

Theorem~\ref{thm:trees} allows us to prove the following result. 

\begin{proposition}
\label{prp:complete-graphs-lower}
If $n\ge 2$, then $\sab^{-}(K_n) = \lfloor \log_2(n) \rfloor + 1$.
\end{proposition}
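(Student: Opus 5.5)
The plan is to use the fact that the spanning trees of $K_n$ are exactly the trees on $n$ vertices, and to evaluate the formula of Theorem~\ref{thm:trees} over all of them. Write $m=\lfloor \log_2 n\rfloor$; the claim is $\sab^-(K_n)=m+1$. The case $n=2$ is handled directly: the only spanning tree is $K_2$, so $\sab^-(K_2)=2=m+1$ by Proposition~\ref{prp:path-cycle-complete}. From now on let $n\ge 3$, so that Theorem~\ref{thm:trees} applies to every spanning tree $T$. The one counting fact needed is that a complete binary tree of height $h$ has $2^{h+1}-1$ vertices.

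\emph{Lower bound.} I will construct a spanning tree $T$ with $\sab(T)\ge m+1$. Take a complete binary tree $B$ of height $m-1$ rooted at $u$; it has $2^m-1\le n-1$ vertices. Attach all remaining $n-2^m+1\ge 1$ vertices of $K_n$ as leaves adjacent to $u$. The result is a tree on $n$ vertices, hence a spanning tree of $K_n$. When $m\ge 2$, the root $u$ has two children in $B$ plus at least one extra neighbour, so $\deg_T(u)\ge 3$. Theorem~\ref{thm:trees} then gives $\sab(T)\ge (m-1)+1+1=m+1$. When $m=1$ (so $n=3$), $T=P_3$ and $\sab(T)=2=m+1$ directly.

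\emph{Upper bound.} Let $T$ be any spanning tree and $T_u$ a complete binary subtree of height $h$. I split into two cases according to the Iverson bracket.
\begin{itemize}
\item If $\deg_T(u)\le 2$, the quantity is $h+1$. Since $2^{h+1}-1\le n$, we get $2^{h+1}\le n+1$.
\item If $\deg_T(u)>2$, the quantity is $h+2$. Here $u$ has a neighbour outside $T_u$, so $2^{h+1}\le n$, which gives $h+2\le \log_2 n+1$, and by integrality $h+2\le m+1$.
\end{itemize}

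The only delicate point is the first case: $2^{h+1}\le n+1$ gives $h+1\le \lfloor\log_2(n+1)\rfloor$, which could exceed $m$ when $n+1$ is a power of two. But in every case $h+1\le \log_2(n+1)$, and $\log_2(n+1)\le \log_2 n+1<m+2$. Since $h+1$ is an integer, this gives $h+1\le m+1$, which is enough. Hence every spanning tree has $\sab(T)\le m+1$, and taking the maximum over spanning trees yields $\sab^-(K_n)=m+1$.
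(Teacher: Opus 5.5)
Your proof is correct. It uses the same ingredients as the paper's proof: Theorem~\ref{thm:trees} applied to spanning trees of $K_n$, together with the count $2^{h+1}-1$ for a complete binary tree of height $h$. The organization differs, though.

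The paper lets $k$ be the largest height with $2^{k+1}-1\le n$ and splits on whether $n=2^{k+1}-1$. In that case the complete binary tree itself is spanning, with root of degree $2$ and value $k+1$. Otherwise one extra vertex is attached at the root and the surplus vertices are hung from it, giving value $k+2$. The paper computes $\sab$ for this one tree and then concludes the value of $\sab^-(K_n)$.

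Your lower bound uses a single construction for all $n\ge 4$: height $m-1$ with all surplus vertices pendant at the root, trading one level of height for the degree bonus. This is essentially the paper's second-case tree, and in the paper's first case it reaches the same value $m+1$ as the full complete binary tree.

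The more substantive difference is your upper bound. You bound every term of the maximum in Theorem~\ref{thm:trees} by splitting on the Iverson bracket, and the paper's proof leaves this step implicit. It is exactly what rules out, when $n+1$ is a power of two, a spanning tree that has a complete binary subtree of height $\log_2(n+1)-1$ and also the degree bonus: the bonus needs one vertex beyond the $2^{h+1}-1$ of the subtree. So your version is the more complete argument, and the paper's only buys brevity.
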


\begin{proof}
Let $n\ge 2$ and let $k$ be the height of a largest complete binary subtree of $K_n$, and fix $T'$ to be such a tree. Then $T'$ is of order $2^{k+1} - 1$, where clearly $2^{k+1} - 1 \le n$. Consider the following two cases. 

\medskip\noindent
{\bf Case 1}: $2^{k+1} - 1 = n$.\\
In this case, $T:=T'$ is a spanning tree of $K_n$ and $k = \log_2(n+1) - 1$. By Theorem~\ref{thm:trees}, $\sab(T) = k + 1 = \log_2(n+1)$.    

\medskip\noindent
{\bf Case 2}: $2^{k+1} - 1 < n$.\\
In this case, we construct a spanning tree $T$ from $T'$, which is a rooted subtree of $K_n$. Let $u$ be its root. Because $n > 2^{k+1} - 1$, there is at least one remaining vertex. We connect exactly one of these remaining vertices to the root $u$ of $T'$. Any other remaining vertices can be attached as leaves to this new vertex so they do not form larger binary subtrees. Hence, by Theorem~\ref{thm:trees}, $\sab(T) = k + 2 <  \log_2(n+1) + 2$. It follows that $\sab(T) = \lfloor \log_2(n+1)\rfloor + 1$.

\medskip
Combining the two cases, we have 
$$\sab(T) =  
\begin{cases}
\log_2(n+1); & n = 2^{k+1} - 1\,,\\[5pt]
\lfloor \log_2(n+1)\rfloor + 1; & \text{otherwise}\,.
\end{cases}$$ 
From here we can conclude that $\sab^{-}(K_n) = \lfloor \log_2(n)\rfloor + 1$.
\end{proof}

\begin{theorem}\label{thm:unicyclic}
If $G$ is a unicyclic graph, then 
$\sab^{-}(G) \leq \sab(G) \leq  \sab^{-}(G)+1$. Moreover, both bounds are sharp.
\end{theorem}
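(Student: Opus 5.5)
The lower bound $\sab^{-}(G)\le\sab(G)$ is Corollary~\ref{cor:sab-versus-sab-minus}, so the work is the upper bound $\sab(G)\le\sab^{-}(G)+1$ and the sharpness examples. Let $C$ be the unique cycle of $G$. The spanning trees of $G$ are exactly $G-e$ for $e\in E(C)$. The plan is to give Blocker a strategy on $G$ that reduces the game, after a single deletion, to a game on some spanning tree $G-e$, with Runner gaining at most one extra vertex.

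\textbf{Blocker's strategy.} Runner's first move is along some edge $v_1v_2$. Blocker's first move is to delete an edge $e$ of $C$, chosen as follows.
\begin{enumerate}[(i)]
\item If $v_1v_2\in E(C)$, he deletes an edge of $C$ incident to $v_2$ other than $v_1v_2$.
\item If $v_1v_2\notin E(C)$, he deletes an edge of $C$ chosen according to where the cycle lies relative to $v_2$. Concretely, let $x$ be the vertex of $C$ closest to $v_2$ in the component of $G-v_1v_2$ containing $v_2$, if $C$ lies there; he deletes an edge of $C$ incident to $x$.
\end{enumerate}
From then on the board is a subgraph of the tree $T=G-e$. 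Blocker continues with an optimal tree strategy on $T$, pretending the game on $T$ began with Runner's edge $v_1v_2$, which is still an edge of $T$ since $e\ne v_1v_2$.

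\textbf{Counting Blocker's tempo.} On $T$ the game would have Blocker respond to $v_1v_2$ with his own first move. Here that turn was spent on $e$, so on $T$ Blocker is exactly one deletion behind. The key claim is that one missing Blocker deletion in a tree game gains Runner at most one extra vertex. I would prove this via the recursion $g$ from the proof of Theorem~\ref{thm:trees}, where $g(y)$ is the second-largest value of $S_y$. If Blocker skips his deletion at $y$, Runner can take the largest value instead of the second largest. The largest value of $S_y$ may exceed the second largest by more than one, though, so this naive bound fails and the argument must use the choice of $e$. Hence the refined approach is to choose $e$ adaptively. For a fixed first edge $v_1v_2$, compute the tree value $1+g_{T}(v_2)$ for each candidate $T=G-e$, and let Blocker pick $e$ minimizing it, while possibly also requiring $e$ to be incident to Runner's current vertex when that vertex lies on $C$. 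Then show that Runner can make at most one extra move by exploiting the missing deletion.

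\textbf{Main obstacle.} The hardest step is this comparison: bounding the value of the game on $G$, where Blocker has used his first turn, by the tree value plus one. I would try induction along Runner's path, using the characterization of $g$ via complete binary subtrees from the proof of Theorem~\ref{thm:trees}. The induction would show that whenever Runner reaches $d$ further guaranteed moves in $G$, some spanning tree contains a complete binary subtree of height $d-1$ rooted at an appropriate vertex with a suitable extra neighbour. The cycle can give at most one branching that no single tree provides, because only one edge lies outside any given spanning tree. Combining this with the formula of Theorem~\ref{thm:trees} yields $\sab(G)\le\sab^{-}(G)+1$.

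\textbf{Sharpness.} For the lower bound, take $G=C_n$: then $\sab(G)=2=\sab(P_n)=\sab^{-}(G)$ by Proposition~\ref{prp:path-cycle-complete}. For the upper bound I would check a small example such as $C_4$ with a pendant vertex attached to each of two adjacent cycle vertices, or a triangle with pendants. In such a graph every spanning tree has value $3$ by Theorem~\ref{thm:trees}, while Runner, entering the cycle from a pendant, can reach $4$ vertices.
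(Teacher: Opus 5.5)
\textbf{Upper bound.} Your lower bound and the cycle example for its sharpness are fine, but the upper-bound argument has a genuine gap. The gap comes from anchoring Blocker's simulation at the wrong move. You let Blocker pretend that the game on $T=G-e$ began with $v_1v_2$. That leaves him one deletion behind, and you are then forced to compare the real game with the tree game for the \emph{fixed} first edge $v_1v_2$. As you observe yourself, the claim that one skipped deletion costs at most one vertex fails in that form. At $v_2$ Runner may take the largest rather than the second-largest entry of $S_{v_2}$, and these can differ arbitrarily. Your proposed repair (an adaptive choice of $e$, an induction producing complete binary subtrees, ``the cycle gives at most one extra branching'') is only a sketch, not a proof.

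The missing observation is that this comparison is not needed. After Blocker deletes \emph{any} edge $e\in E(C)\setminus\{v_1v_2\}$, all further play happens in the tree $T=G-e$. Runner is at $v_2$ and about to move, and no deletion is owed inside $T$. That is exactly a fresh sabotage game on $T$ whose first Runner move is $v_2v_3$; the edge $v_1v_2$ being already used only restricts Runner. So Blocker's optimal strategy on $T$ keeps the number of vertices visited from $v_2$ onward to at most $\sab(T)\le\sab^{-}(G)$. The only other visited vertex is $v_1$, so $\sab(G)\le\sab^{-}(G)+1$. This is the paper's argument. The ``$+1$'' is just Runner's unopposed first move, which $\sab(T)$ already grants her, so you need no rule for choosing $e$ and no use of the recursion $g$.

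\textbf{Sharpness of the upper bound.} Your examples do not work. Take $C_4=abcda$ with pendants $a'$ at $a$ and $b'$ at $b$. After Runner plays $a'a$, Blocker deletes $ab$; Runner must go to the degree-$2$ vertex $d$, Blocker deletes $dc$, and only $3$ vertices are visited. Checking the other starts gives $\sab(G)=3$. Since $G-bc$ is a spider with value $3$, here $\sab(G)=\sab^{-}(G)$.

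The same happens for a triangle $abc$ with one pendant at each vertex. After $a'a$, Blocker deletes $bc$, and whichever of $b,c$ Runner enters has only its pendant edge left, which Blocker deletes. Again $\sab(G)=3=\sab^{-}(G)$.

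The two branches Runner relies on must not be destroyable by a single deletion. The paper ensures this with a complete binary tree of height $h\ge 2$ plus a leaf $u$ at the root $r$, identifying the rightmost leaf of the left subtree with the leftmost leaf of the right subtree. For $h=2$ this is $C_4=rxzy$ with pendants at $r$, $x$, $y$. Then $\sab^{-}(G)=h+1$, while Runner, starting with $ur$ and copying her strategy on the tree, visits $h+2$ vertices. A triangle can also work, but only with an unbalanced choice, for example one pendant at $a$, two at $b$ and one at $c$.
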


\begin{proof}
Let $G$ be a unicyclic graph and $C$ be the unique cycle in $G$. The lower bound follows by Corollary~\ref{cor:sab-versus-sab-minus}. 

To prove the upper bound, let $u_1 u_2$ be Runner's first move. Blocker then removes an edge $e\in E(C)$ that Runner has not just traversed. Since $C$ has at least 3 edges, such an edge always exists. The graph $G-e$ is a spanning tree, say $T_e$. No matter how Runner plays from $u_2$, she is restricted to traversing the edges of $T_e$. Therefore, in the remainder of the game, she can visit at most $\sab(T_e)$ additional vertices. Together with her initial vertex $u_1$, she visits at most $\sab(T_e) + 1$ vertices. By the definition of the lower sabotage number, $\sab(T_e) \le \sab^{-}(G)$. Therefore, 
$$\sab(G) \le \sab(T_e) + 1 \le \sab^{-}(G) + 1\,.$$

As already mentioned, $\sab(C_n)=2$ for any cycle $C_n$, with $n\geq 3$. Also, the highest complete binary tree a cycle $C_n$ contains is of height 1 and its root is always of degree 2. By Theorem \ref{thm:trees}, $\sab(T)=2$ for any spanning tree $T$ of $C_n$. Hence, $\sab^{-}(C_n) = 2 = \sab(C_n)$ and therefore cycles achieve the lower bound.

For the upper bound sharpness, consider the following graph. Let $H$ be a complete binary tree of height $h\geq 2$ with the root $r$ and an additional leaf $u$ adjacent to $r$. Construct $G$ by identifying the right most leaf of the left subtree of $r$ (ignoring $u$) with the leftmost leaf of the right subtree of $r$ (ignoring $u$). Clearly, $G$ is a unicyclic graph with a unique cycle $C$. 

Let $T$ be a spanning tree of $G$ such that $\sab(T)=\sab^{-}(G)$. $T$ is obtained from $G$ by removing an edge $e\in E(C)$. Then for any $v\in V(T)$ and any complete binary subtree $T_v$ of $T$ it holds that $\height(T_v) \leq h-1$. By Theorem \ref{thm:trees}, $\sab(T) = h - 1 + 2 = h+1$. Hence, $\sab^{-}(G) = h+1$.

Consider the following strategy for Runner on $G$. She imagines playing the game in the original $H$ and responds by moving via the corresponding edge in $G$. In her first move she starts at $u$ and moves to $r$. Next, Blocker removes some edge $e$ of $G$. She imagines Blocker removing the corresponding edge in $H$, uses her optimal strategy there, and moves via the corresponding available edges in $G$. It follows that she can make $h$ more moves, hence the total number of vertices she visits is $h+2 = \sab^{-}(G)+1$. This demonstrates that the upper bound is sharp.
\end{proof}

\section{Subcubic graphs}
\label{sec:subcubic}

The following concept from a very recent paper~\cite{lin-2026} will be extremely useful to us in this section. Let $G$ be a bridgeless, connected graph. Then every edge $e$ lies in a cycle. Let $\ell_G(e)$ be the length of a shortest cycle containing $e$. Then 
$$g^\ast(G) = \max \{ \ell_G(e):\ e\in E(G)\}$$
is the {\em maximum edge girth} of $G$.

\begin{theorem}
\label{thm:subcubic}
If $G$ is a connected, subcubic, bridgeless graph, then $\sab(G) \le g^\ast(G)$. Moreover, the bound is sharp.
\end{theorem}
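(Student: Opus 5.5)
The plan is to give an explicit Blocker strategy which uses the fact that, in a subcubic graph, Blocker can always leave Runner with at most one way to continue. Runner's first move is along some edge $e=u_1u_2$. Since $G$ is bridgeless, $e$ lies on a cycle. Fix a shortest cycle $C$ through $e$, so that $|C|=\ell_G(e)\le g^\ast(G)$.

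Blocker's strategy is to keep Runner on $C$.

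1. Whenever Runner has just arrived at a vertex $v$ of $C$, Blocker deletes the edge at $v$ not belonging to $C$, if there is one. Since $\deg(v)\le 3$ and $v$ lies on $C$, at most one edge at $v$ leaves $C$.
2. If there is no such edge, Blocker deletes an arbitrary edge not in $E(C)$. If $G=C$, Blocker deletes any edge and we are done by Proposition~\ref{prp:path-cycle-complete}, since then $\sab(G)=2\le g^\ast(G)$.

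Blocker never deletes an edge of $C$. At each vertex $v\ne u_1$ of $C$ that Runner reaches, her only untraversed edges are therefore the one $C$-edge ahead of her on $C$ and possibly the off-$C$ edge, and Blocker has just deleted the latter.

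By induction on the number of moves, Runner is forced to walk along $C$ in the direction fixed by her first move. After $|C|-1$ further moves she returns to $u_1$ along the last edge of $C$. At that point both $C$-edges at $u_1$ (one of which is $e$) have been traversed. Blocker deletes the third edge at $u_1$ if it exists, and Runner is trapped. She has visited exactly the $|C|$ vertices of $C$, so $\sab(G)\le \ell_G(e)\le g^\ast(G)$.

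The points to check carefully are these:
\begin{itemize}
\item Runner cannot leave $C$ earlier through a chord, because a chord is exactly the off-$C$ edge that Blocker deletes on arrival.
\item The edge ahead on $C$ is never already traversed, because $C$ is a cycle and Runner visits its vertices in order.
\item Blocker has a legal deletion available even at degree-$2$ vertices. This holds as long as some edge outside $C$ remains, and otherwise deleting any edge other than the next $C$-edge suffices, or the game is on a cycle.
\end{itemize}
I expect the main obstacle to be only this bookkeeping: Blocker's single deletion per turn must suffice because $v$ has at most one non-$C$ edge, and subcubicity is exactly what guarantees this.

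For sharpness, take $K_4$. It is cubic and bridgeless, every edge lies in a triangle, so $g^\ast(K_4)=3$, and by Proposition~\ref{prp:path-cycle-complete} we have $\sab(K_4)=3$. Thus equality holds. If an infinite family is desired, I would next check whether $K_{3,3}$ attains $\sab=4=g^\ast$.
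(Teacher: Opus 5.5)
Your proof is correct. The upper-bound argument is the same as the paper's: Blocker fixes a shortest cycle $C$ through Runner's first edge $u_1u_2$. Each time Runner arrives at a vertex of $C$, he deletes the edge leaving $C$ there; by subcubicity there is at most one such edge. Runner can therefore only go once around $C$, and visits at most $\ell_G(u_1u_2)\le g^\ast(G)$ vertices. Your extra bookkeeping (chords, degree-$2$ vertices, the forced final arrival at $u_1$, Blocker always having a legal deletion) is sound and somewhat more careful than the paper's version.

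The only difference is the sharpness witness. You use $K_4$, where $g^\ast(K_4)=3=\sab(K_4)$ follows directly from Proposition~\ref{prp:path-cycle-complete}. The paper instead uses the infinite family $P_2\cp P_n$, $n\ge 4$, which has $g^\ast=4$. It gets $\sab\ge 4$ because these grids contain the middle graph of Fig.~\ref{fig:forbidden}, applying Lemma~\ref{lem:subgraph} through Proposition~\ref{prp:delta=3-triangle-free}. Your example is the cheaper certificate and is enough for the statement as written. The paper's family additionally shows that equality is attained by arbitrarily large graphs.

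Your $K_{3,3}$ guess is also right. $K_{3,3}$ contains the right-hand graph of Fig.~\ref{fig:forbidden}, so $\sab(K_{3,3})\ge 4$, and the theorem itself gives $\sab(K_{3,3})\le g^\ast(K_{3,3})=4$.
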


\begin{proof}
Let $u_1u_2$ be Runner's optimal first move, and let $C = u_1u_2\ldots u_ku_1$ be a shortest cycle of $G$ containing $u_1u_2$. Since $G$ is bridgeless, such a cycle exists. Note that $k\le g^\ast(G)$. 

Assume first that every vertex of $C$ is of degree $3$, and let $x_1, \dots, x_k$ be the respective neighbors of $u_1, \dots, u_k$ which do not belong to $C$. Consider the following strategy for Blocker. He first removes the edge $u_2x_2$. After that, Runner is forced to traverse $u_2u_3$. Next, Blocker removes the edge $u_3x_3$. Proceeding along these lines, Blocker can achieve the goal that Runner can only traverse the edges of $C$. Moreover, if some of the vertices of $C$ are of degree $2$, Blocker can achieve the same goal. Hence we can conclude that $\sab(G) \le k \le g^\ast(G)$.

For the sharpness, consider the grid graphs $P_2\cp P_n$, $n\ge 4$. (For the definition of the Cartesian product $G\cp H$ of graphs $G$ and $H$ see~\cite{hammack-2011}.) Fix $n$ and set $G = P_2\cp P_n$ for the rest of the proof. Note first that $g^\ast(G) = 4$. Hence by the already proved inequality, $\sab(G) \le 4$. On the other hand, $G$ contains the middle graph from Fig.~\ref{fig:forbidden} as a subgraph, see Fig.~\ref{fig:p2-4-subgraph} for the case when $n=4$. 

\begin{figure}[ht!]
    \centering
\begin{tikzpicture}[
    node style/.style={circle, draw, fill=white, minimum size=8pt, inner sep=0pt, thick},
    edge style/.style={line width=1.5pt}
]
        \node[node style] (p1-1) at (0, 0) {};
        \node[node style] (p1-2) at (1, 0) {};
        \node[node style] (p1-3) at (2, 0) {};
        \node[node style] (p1-4) at (3, 0) {};

        \node[node style] (p2-1) at (0, 1) {};
        \node[node style] (p2-2) at (1, 1) {};
        \node[node style] (p2-3) at (2, 1) {};
        \node[node style] (p2-4) at (3, 1) {};

        \draw[edge style] (p1-1) -- (p2-1);
        \draw[edge style] (p2-3) -- (p2-4);
        \draw[edge style, red] (p1-2) -- (p2-2);
        \draw[edge style, red] (p1-3) -- (p2-3);
        \draw[edge style] (p1-4) -- (p2-4);
        \draw[edge style, red] (p2-1) -- (p2-2)--(p2-3);
        \draw[edge style, red] (p1-1) -- (p1-2)--(p1-3)--(p1-4);
        
\end{tikzpicture}
\caption{A subgraph of $P_2\cp P_4$}
\label{fig:p2-4-subgraph}
\end{figure}

Proposition~\ref{prp:delta=3-triangle-free} now implies that $\sab(G) \ge 4$. We can conclude that $\sab(G) = 4 = g^\ast(G)$.
\end{proof}

To complete the section we demonstrate that $\sab(G)$ can be arbitrarily smaller than $g^\ast(G)$, that is, the bound in Theorem~\ref{thm:subcubic} can be far from optimal. Let $G_m$, $m \ge 3$, be the cubic graph depicted in Fig.~\ref{fig:subcubic}. It is obtained from a cycle $C_{2m}=u_0u_1\cdots u_{2m-1}u_0$, $m \ge 3$, by attaching, along alternating edges of the cycle, copies of the gadget shown in the figure. 

\begin{figure}[ht!]
    \centering
\begin{tikzpicture}[
    dot/.style={circle, draw, fill=white, inner sep=1.5pt},
    ring edge/.style={draw=black, thick},
    gadget edge/.style={draw=black, thick},
    dotted edge/.style={draw=black, thick, loosely dotted},
    bend arc/.style={draw=black, thick, out=0, in=0,looseness=2},
    scale=0.8
]

\def\radius{3cm}
\def\n{10} 
\def\angleStep{36} 

\node at (0,0) [scale=1.5] {$C_{2m}$};

\foreach \i in {1,...,\n} {
    \pgfmathsetmacro{\angle}{(\i-1)*\angleStep + 18}
    \node[dot] (v\i) at (\angle:\radius) {};
}

    \pgfmathsetmacro{\angle}{(6)*\angleStep + 18}
    \pgfmathsetmacro{\angle}{(5)*\angleStep + 18}
    \node[right] (ui) at (\angle:\radius) {$u_i$};
    \pgfmathsetmacro{\angle}{(4)*\angleStep + 18}
    \node[right] (uip1) at (\angle:\radius) {$u_{i+1}$};
    \pgfmathsetmacro{\angle}{(3)*\angleStep + 18}

    \node[below] (g1) at (-4,-1) {$u'_i$};
    \node[right] (g2) at (-4, 0) {$w_i$};
    \node[above] (g3) at (-4, 1) {$u'_{i+1}$};

\foreach \i in {1,...,\n} {
    \pgfmathtruncatemacro{\j}{mod(\i,\n)+1}
    \ifnum\i=\n
        \draw[dotted edge] (v\i) -- (v\j);
    \else
        \draw[ring edge] (v\i) -- (v\j);
    \fi
}

\foreach \i in {1,...,\n} {
    \pgfmathsetmacro{\angle}{(\i-1)*\angleStep + 18}
    \node[dot] (v\i) at (\angle:\radius) {};
}

\node at (-144:\radius + 0.10cm) {$e$};

\def\placeGadget#1{
    \pgfmathsetmacro{\startIdx}{#1}
    \pgfmathsetmacro{\centerAngle}{(\startIdx-1)*\angleStep + \angleStep/2 + 18}
    
    \begin{scope}[rotate=\centerAngle, shift={(\radius, 0)}]
        \coordinate (b1) at (-0.1, -0.927); 
        \coordinate (b2) at (-0.1, 0.927);
        
        \coordinate (q1A) at (1.0, -0.927);
        \coordinate (q1B) at (1.0, 0.927);
        \coordinate (qMid) at (1, 0); 
        
        \coordinate (q2A) at (2, -0.927);
        \coordinate (q2B) at (2, 0.927);
        \coordinate (q2Mid) at (2, 0);
        
        \draw[gadget edge] (b1) -- (q1A);
        \draw[gadget edge] (q1B) -- (b2);
        \draw[gadget edge] (q1A) -- (qMid) -- (q1B);
        \draw[gadget edge] (q1A)--(q2A)--(q2Mid)--(q2B)--(q1B);
        \draw[gadget edge] (qMid) -- (q2Mid);
        
        \draw[bend arc] (q2B) to (q2A);
        
        \foreach \p in {q1A, q1B, qMid, q2A, q2B, q2Mid} { \node[dot] at (\p) {}; }
    \end{scope}
}

\foreach \edgeStart in {1, 3, 5, 7, 9} {
    \placeGadget{\edgeStart}
}

\foreach \i in {1,...,\n} {
    \pgfmathsetmacro{\angle}{(\i-1)*\angleStep + 18}
    \node[dot] at (\angle:\radius) {};
}

\end{tikzpicture}
\caption{The graph $G_m$, $m \geq 3$}
\label{fig:subcubic}
\end{figure}

\begin{proposition}
\label{prp:Gm}
If $m\ge 4$, then 
$$g^\ast(G_m) - \sab(G_m) \ge 2m -6\,.$$
\end{proposition}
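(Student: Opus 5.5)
The statement follows from two estimates: $g^\ast(G_m)\ge 2m$ and $\sab(G_m)\le 6$. Together they give $g^\ast(G_m)-\sab(G_m)\ge 2m-6$.

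\textbf{Lower bound on $g^\ast(G_m)$.} I will read off the structure of $G_m$ from Fig.~\ref{fig:subcubic}. A gadget is attached to every second edge $u_iu_{i+1}$ of $C_{2m}$. It consists of vertices $u_i',w_i,u_{i+1}'$ and three further vertices, which I call $a_i,b_i,c_i$; here $a_i$ lies next to $u_i'$, $b_i$ is the middle one adjacent to $w_i$, and $c_i$ lies next to $u_{i+1}'$. Its edges are
\[
u_iu_i',\ u_{i+1}u_{i+1}',\ u_i'w_i,\ w_iu_{i+1}',\ u_i'a_i,\ a_ib_i,\ b_ic_i,\ c_iu_{i+1}',\ w_ib_i,\ a_ic_i .
\]
Thus $G_m$ is cubic. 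Now consider a ``free'' cycle edge $e=u_{i+1}u_{i+2}$, that is, one carrying no gadget. The pair $\{u_iu_{i+1},\,u_{i+1}u_{i+1}'\}$ together with its mirror image at the other end of $e$ shows that $G_m-e$ has the following shape: a chain of gadget blocks joined only through free cycle edges. Hence every cycle through $e$ must pass through each of the $m$ gadget blocks and each of the $m$ free edges. Each block contributes a $u_j$--$u_{j+1}$ path of length at least $1$, so every cycle through $e$ has length at least $2m$. Therefore $g^\ast(G_m)\ge \ell_{G_m}(e)\ge 2m$.

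\textbf{Upper bound on $\sab(G_m)$.} I will use the forcing mechanism from the proof of Theorem~\ref{thm:subcubic}. In a cubic graph, when Runner arrives at a vertex with two unused incident edges, Blocker deletes one of them, so Runner's next move is forced. Blocker can therefore steer Runner along any closed route he likes through the edges available at each step. Runner is trapped when she returns to a vertex all of whose edges are used or deleted. The argument splits according to Runner's first edge $u_1u_2$.

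\emph{Case 1: $u_1u_2$ lies on a cycle of length at most $6$.} Blocker forces Runner around that cycle exactly as in Theorem~\ref{thm:subcubic}, so she visits at most $6$ vertices. I will check that this covers every edge except the free cycle edges:
\begin{itemize}
\item a gadget-carrying edge $u_iu_{i+1}$ and the edges $u_iu_i'$, $u_{i+1}u_{i+1}'$, $u_i'w_i$, $w_iu_{i+1}'$ all lie on the $5$-cycle $u_iu_i'w_iu_{i+1}'u_{i+1}u_i$;
\item the remaining internal gadget edges lie on the $4$-cycles $u_i'w_ib_ia_i$ and $w_iu_{i+1}'c_ib_i$, or on the triangle $a_ib_ic_i$.
\end{itemize}

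\emph{Case 2: $u_1u_2$ is a free edge.} By symmetry, say Runner moves from $u_{i+1}$ to $u_{i+2}$, where $u_{i+2}u_{i+3}$ carries a gadget. Blocker plays as follows:
\begin{enumerate}
\item delete $u_{i+2}u_{i+3}$, forcing Runner to $u'_{i+2}$;
\item delete $u'_{i+2}a_{i+2}$, forcing her to $w_{i+2}$;
\item delete $w_{i+2}u'_{i+3}$, forcing her to $b_{i+2}$;
\item delete $b_{i+2}c_{i+2}$, forcing her to $a_{i+2}$;
\item delete $a_{i+2}c_{i+2}$.
\end{enumerate}
After step 5, Runner's only unused edge at $a_{i+2}$ is $a_{i+2}u'_{i+2}$, which Blocker already deleted in step 2, so she is stuck. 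Her route visits the vertices $u_{i+1},u_{i+2},u'_{i+2},w_{i+2},b_{i+2},a_{i+2}$, which is $6$ vertices. The reverse direction of traversal is symmetric.

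Hence $\sab(G_m)\le 6$ in all cases, and the stated inequality follows.

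\textbf{Expected main obstacle.} The delicate point is the lower bound on $g^\ast$: I must argue cleanly that no short cycle through a free edge can bypass the long cycle. For this I would first invoke the cut structure, namely that each gadget block is attached to the rest of the graph only at $u_j$ and $u_{j+1}$. Everything else is a routine case check that every non-free edge lies on a cycle of length at most $6$.
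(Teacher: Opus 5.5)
Your proof is correct and follows essentially the paper's route. You show $g^\ast(G_m)\ge 2m$ via a free cycle edge, and your cut argument (each gadget block meets the rest only at $u_j,u_{j+1}$) makes rigorous what the paper asserts ``by construction''. You show $\sab(G_m)\le 6$ with Blocker's forcing strategy from Theorem~\ref{thm:subcubic}; the only deviation is that for a start on a free edge you trap Runner in the $4$-cycle $u'_{i+2}w_{i+2}b_{i+2}a_{i+2}$ rather than forcing her around the $5$-cycle $u_{i+2}u'_{i+2}w_{i+2}u'_{i+3}u_{i+3}$ as the paper does, and both give at most six visited vertices.
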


\begin{proof}
All indices in the proof will be considered modulo $2m$. Denote three vertices of the gadget corresponding to the edge $u_iu_{i+1}$ by $u'_i$, $w_i$, and $u'_{i+1}$, as indicated in Fig.~\ref{fig:subcubic}. Set also $e = u_{i-1}u_i$.

We first determine $g^\ast(G_m)$. The edge $e$ lies on the large cycle $C_{2m}$. By construction, no shorter cycle contains $e$. Therefore $\ell_{G_m}(e)=2m$. On the other hand, every edge different from $e$ lies on a cycle of length at most $2m$. Hence $g^\ast(G_m)=2m$. To complete the argument, we need to demonstrate that $\sab(G_m) \le 6$.

Consider the following strategy of Blocker. Assume first that Runner starts by traversing the edge  $u_{j-1}u_{j}$ of the cycle $C_{2m}$. Then Blocker removes the edge $u_ju_{j+1}$. Since Runner is not allowed to traverse an edge that has already been used, she cannot return to $u_{j-1}$. If Runner has no available move, then the game ends immediately. Otherwise, in her next move she must enter one of the attached gadgets. Without loss of generality, assume that Runner enters the gadget drawn around the edge $u_iu_{i+1}$ by traversing one of the edges $u_i u'_i$ or $u_{i+1} u'_{i+1}$, which are both part of the $5$-cycle $u_iu'_iw_iu'_{i+1}u_{i+1}u_i$. Since the vertices forming this $5$-cycle have degree $3$, Blocker can force Runner to visit only those five vertices. Together with at most one other vertex visited outside of this $5$-cycle by Runner, we have at most six visited vertices by Runner. The same argument applies if Runner starts inside a gadget. In that case, Blocker immediately uses the strategy from the proof of Theorem~\ref{thm:subcubic}: he chooses a shortest cycle in the gadget containing Runner's first edge and, whenever Runner reaches a vertex of that cycle, removes the unique edge leaving the cycle, if such an edge exists. Therefore Runner remains trapped in a cycle of length at most $5$. It follows that in all cases Blocker can ensure that Runner visits at most six vertices, and hence $\sab(G_m)\le 6$. 

We can conclude that
$$g^\ast(G_m) - \sab(G_m) = 2m - \sab(G_m) \ge 2m -6\,$$
as claimed.
\end{proof}

The aim of Proposition~\ref{prp:Gm} was to show that $g^\ast(G) - \sab(G)$ can be arbitrarily large. We add that one could actually prove the equality in the proposition. Indeed, the idea is to assume that Runner starts the game by traversing the edge $u'_iu_i$, and then by case analysis conclude that she can visit six vertices of $G_m$, so that $\sab(G_m) = 6$. 

\section{Complete bipartite and Sierpi\'nski graphs}
\label{sec:additional}

\subsection{Complete bipartite graphs}
\label{sec:complete-bipartite}

\begin{theorem}
\label{thm:complete_bipartite}
If $m \ge n \ge 2$, then
$$\sab(K_{m,n}) \leq m+n-\left\lceil \frac{m-1}{n} \right\rceil - 1.$$
\end{theorem}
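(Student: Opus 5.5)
The plan is to give an explicit Blocker strategy. Write the bipartition as $A\cup B$ with $|A|=m$ and $|B|=n$, and set $k=\left\lceil \frac{m-1}{n}\right\rceil$. Blocker will try to keep a set $S\subseteq A$ of $k$ vertices unvisited for the whole game, and then gain one more unvisited vertex at the end. Runner's trail alternates between $A$ and $B$. Therefore Runner can reach a vertex $s\in S$ only by moving along an edge $bs$ with $b\in B$ her current position. Hence it suffices that, whenever Runner stands on some $b\in B$ and it is her turn, every edge between $b$ and $S$ has already been deleted.

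\textbf{Choice of $S$ and Blocker's rule.} After Runner's first move, choose $S$ among the $A$-vertices not yet visited, avoiding the endpoints of the first edge. Blocker then plays as follows. If Runner has just arrived at $b\in B$ and $b$ still has an edge to $S$, Blocker deletes such an edge. Otherwise, he spends his move pre-emptively on an edge between $S$ and some $b'\in B$. He takes $b'$ to be a vertex Runner could reach soonest, for instance a $B$-vertex adjacent to her current $A$-vertex that has the most remaining edges to $S$. The key invariant is the following. At any moment when Runner is on an $A$-vertex $a$, every $B$-neighbour $b$ of $a$ reachable by an available edge has at most one remaining edge to $S$. That single edge is then deleted in the reactive move right after she arrives at $b$.

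\textbf{Counting step (main obstacle).} The main work is to show this invariant can be maintained, by comparing budgets. The total number of $S$--$B$ edges is $nk$. Blocker gains one deletion per Runner move, and Runner needs two moves (one into $A$, one back into $B$) between consecutive arrivals in $B$. So for every new $B$-vertex she reaches, Blocker has essentially one pre-emptive deletion to spend. The dangerous case is Runner rushing to many fresh $B$-vertices, each still carrying $k$ edges to $S$. I would handle this with a potential argument: track $\Phi=\sum_{b}\max\{0,\,|E(b,S)|-1\}$ over $B$-vertices still reachable, and show Blocker can keep $\Phi$ small relative to the number of remaining $A$-vertices outside $S$. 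The choice $k=\lceil (m-1)/n\rceil$, i.e.\ $nk\ge m-1$ but $n(k-1)<m-1$, should be exactly what makes the budgets match. I expect this to be the hardest step, and I would first test it on the small cases $n=2$, $m\in\{3,4,5\}$ to find the right priority rule for the pre-emptive deletions.

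\textbf{Final extra vertex.} It remains to show that Runner also misses one further vertex besides $S$. I would argue that when Runner has visited all of $(A\setminus S)\cup B$ except one vertex, Blocker's reactive deletion can be aimed at the unique available edge toward that last vertex. Every remaining vertex on Runner's side has been entered and left, so it has few live edges, and Runner is stuck one vertex short. Combining this with $|S|=k$ gives at least $k+1$ unvisited vertices, which is exactly $\sab(K_{m,n})\le m+n-\left\lceil \frac{m-1}{n}\right\rceil-1$.
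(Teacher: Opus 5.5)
Your proposal has a genuine gap, and it sits in the step you flag as the main obstacle. The invariant you want to maintain is false, so no potential argument can rescue it.

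With a fixed set $S$ of $k=\lceil (m-1)/n\rceil$ vertices of $A$, every $B$-vertex Runner reaches must already have at most one live edge into $S$ when she arrives, since Blocker gets only one deletion before she moves on. But Runner reaches a $B$-vertex with all $k$ edges into $S$ intact almost at once:
\begin{itemize}
\item If her first move is $a\to b$ with $b\in B$, she is already at $b$. Blocker removes one of the $k$ edges $bs$ with $s\in S$, and she walks into $S$ whenever $k\ge 2$. For example, take $m=5$, $n=2$, $k=2$.
\item If her first move is $b\to a$ and $n\ge 3$, then $a$ has at least two fresh $B$-neighbours, each carrying $k$ edges into $S$. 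Blocker's single deletion touches at most one of them, so she moves to an untouched one and again enters $S$ when $k\ge 2$.
\end{itemize}
So protecting a pre-chosen $k$-set simultaneously is impossible as soon as $m\ge n+2$. Your budget heuristic is off: each fresh $B$-vertex needs $k-1$ pre-emptive deletions, not one, and Runner reaches a fresh one within two moves.

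Your final-extra-vertex step is also unsupported. When Runner stands on a $B$-vertex adjacent both to $S$ and to the last target, Blocker can cut only one of the two edges, and nothing in your strategy reserves moves for this.

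The paper's proof supplies two missing ideas. First, Blocker fixes the extra unvisited vertex at the start: a $B$-vertex $v_n$ different from the $B$-endpoint of Runner's first edge. He protects it only with the moves made while Runner stands on $A$, deleting $u_iv_n$ whenever she is at $u_i$. These deletions never compete with the ones made while she stands on $B$. Moreover, since $v_n$ is never visited, Runner can enter $A$ only from the other $n-1$ vertices of $B$.

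Second, vertices of $A$ are protected \emph{one at a time}, not as a fixed set. While the current target $u$ is being isolated, each of Blocker's moves with Runner on $B$ deletes a new edge between $u$ and $B\setminus\{v_n\}$: the edge at Runner's position if it is still there, otherwise another one. So a phase lasts at most $n-1$ such turns, and Runner enters at most $n-1$ new vertices of $A$ during it. Each phase therefore uses up at most $n$ vertices of $A\setminus\{u_1\}$ while making one of them permanently unreachable. This yields $\lceil (m-1)/n\rceil$ unvisited vertices of $A$, plus $v_n$.

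The conceptual correction is that Blocker cannot stop Runner from visiting many $A$-vertices. He can only guarantee one protected vertex per at most $n-1$ visited ones, and the target should be chosen afresh each phase among the still-unvisited vertices.
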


\begin{proof}
Let $A=\{u_1,\ldots,u_m\}$ and $B=\{v_1,\ldots,v_n\}$ form the bipartition of $K_{m,n}$, where $m \ge n \ge 2$, and put $q=\left\lceil\frac{m-1}{n}\right\rceil$. 

We prove that $\sab(K_{m,n})\le m+n-q-1$. Without loss of generality, Runner's first move is along the edge $u_1v_1$, in either direction. Thus $u_1$ and $v_1$ are visited after the Runner's first move. Since $n \ge 2$, Blocker chooses the vertex $v_n\in B\setminus\{v_1\}$. His first goal is to ensure that $v_n$ is never visited. Whenever Runner is at the vertex $u_i \in A$, Blocker removes the edge $u_iv_n$, provided that it has not already been removed. If the edge has already been removed, then Blocker can remove any other edge. Since Runner can reach $v_n$ only from a vertex of $A$, this guarantees that Runner never visits $v_n$.

Blocker's second goal is to also protect vertices of $A$ if possible. Let $X$ denote the set of vertices in $A$ that have not yet been visited by Runner and have not yet been protected by Blocker. Initially, $X=A\setminus\{u_1\}$, and therefore $|X|=m-1$.

As long as $|X|>n$, Blocker proceeds as follows. He chooses an arbitrary vertex $u\in X$ and protects it: whenever Runner is at a vertex $v_j\in B$, Blocker removes the edge $v_ju$, provided that it has not already been removed. After all edges incident with $u$ have been removed, the vertex $u$ can no longer be visited by Runner, and Blocker chooses a new vertex from $X$ and repeats the procedure. Observe that during the protection of a vertex $u$, Runner can visit at most $n-1$ new vertices of $A$. Indeed, the vertex $v_n$ is never visited, and hence Runner can enter $A$ from at most the remaining $n-1$ vertices of $B$. Consequently, each protection phase accounts for exactly one protected vertex of $A$ by Blocker and at most $n-1$ additional vertices of $A$ visited by Runner. Thus each completed protection phase accounts for at most $n$ vertices of $X$ (note that in the very first protection phase, the number of newly visited vertices by Runner is at most $n-2$ because we already considered the vertex $u_1$ in her first move; and altogether this is again at most $n-1$ visited vertices before Blocker protects a vertex of $A$). Blocker repeats this process as long as $|X|>n$. After that, we distinguish three cases.

\medskip\noindent
{\bf Case 1}: $|X|=1$. \\
In this case, there is exactly one unvisited vertex left in $A$. Moreover, note that $m-1$ is divisible by $n$. Up until now Blocker has completed exactly $\frac{m-1}{n}$ protection phases. When he deleted the last edge of his last protected vertex, Runner was at a vertex $v \in B$. Therefore, Runner will visit that last unvisited vertex in her next move. Hence Blocker has protected exactly $\frac{m-1}{n}=\left\lceil\frac{m-1}{n}\right\rceil$ vertices of $A$.

\medskip\noindent
{\bf Case 2}: $1<|X|<n$. \\
Now $m-1$ is not divisible by $n$. Before reaching this situation, Blocker has completed $\left\lfloor\frac{m-1}{n}\right\rfloor$
protection phases. Since $|X|>1$, Blocker can choose one additional vertex of $X$ and protect it. Therefore the total number of protected vertices of $A$ equals $\left\lfloor\frac{m-1}{n}\right\rfloor+1=\left\lceil\frac{m-1}{n}\right\rceil$.

\medskip\noindent
{\bf Case 3}: $|X|=n$. \\
In this case, $m$ is divisible by $n$, and hence $m-1$ is not divisible by $n$. Again Blocker has already completed $\left\lfloor\frac{m-1}{n}\right\rfloor$ protection phases. Since $|X|=n$, he can protect one additional vertex from $X$. Consequently, the total number of protected vertices of $A$ is $\frac{m}{n}=\left\lceil\frac{m-1}{n}\right\rceil$.

\medskip

In all the three cases, Blocker ensures that at least $q=\left\lceil\frac{m-1}{n}\right\rceil$ vertices of $A$ remain unvisited by Runner. Together with the vertex $v_n\in B$, which is also never visited, this yields at least $q+1=\left\lceil\frac{m-1}{n}\right\rceil+1$ unvisited vertices. Therefore
$$\sab(K_{m,n}) \le m+n-\left\lceil\frac{m-1}{n}\right\rceil-1\,,$$
which proves the bound.
\end{proof}

The bound in Theorem~\ref{thm:complete_bipartite} is sharp, for instance, $\sab(K_{2,2})=\sab(C_4)=2$, and $\sab(K_{3,2})=3$, since $K_{3,2}$ satisfies the conditions of Proposition~\ref{prp:delta=3-triangle-free}. 
Also, since $K_{3,2}$ is a subgraph of $K_{4,2}$ it follows from Lemma \ref{lem:subgraph} and Theorem \ref{thm:complete_bipartite} that $\sab(K_{4,2})=3$. However, when $m \ge 5$ and $n=2$, the upper bound from Theorem~\ref{thm:complete_bipartite} yields
$$m+2-\left\lceil \frac{m-1}{2} \right\rceil - 1 > 3\,,$$ and therefore is not sharp, which can be seen from Theorem~\ref{thm:complete_bipartite_new}. 

We next prove a different upper bound on $\sab(K_{m,n})$ and give an exact result when $n=2$. 

\begin{theorem}
\label{thm:complete_bipartite_new}
If $m \ge 2$, then
$$\sab(K_{m,2}) =  
\begin{cases}
2; & m=2\,,\\[5pt]
3; & m \ge 3\,,
\end{cases}$$
and if  $m \geq n \ge 3$, then
$$\sab(K_{m,n}) \leq m+n-\left\lfloor \frac{2m+1}{n+2} \right\rfloor .$$
\end{theorem}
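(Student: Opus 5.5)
We treat the two parts of Theorem~\ref{thm:complete_bipartite_new} separately. Throughout, let $A=\{u_1,\dots,u_m\}$ and $B=\{v_1,\dots,v_n\}$ be the bipartition classes.

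\textbf{The case $n=2$.} For $m=2$ we have $K_{2,2}=C_4$, so Proposition~\ref{prp:path-cycle-complete} gives the value $2$. Now let $m\ge 3$.

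The lower bound $\sab(K_{m,2})\ge 3$ holds because $\Delta(K_{m,2})=m\ge 3$. Indeed, Runner may start along an edge $u_1v_1$ towards $v_1$, as in the remark preceding Proposition~\ref{prp:delta=3-triangle-free}.

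For the upper bound we give Blocker a direct strategy.
\begin{itemize}
\item If Runner starts by moving from some $v_i$ to $u_j$, Blocker deletes the only other edge at $u_j$, namely $u_jv_{3-i}$. Runner is then trapped after visiting $2$ vertices.
\item If Runner starts from $u_j$ to $v_i$, Blocker deletes an arbitrary edge. Runner must move to some $u_k$ with $k\ne j$. Blocker then deletes $u_kv_{3-i}$, and Runner is trapped after visiting $3$ vertices.
\end{itemize}
Hence $\sab(K_{m,2})=3$. As a cross-check, Proposition~\ref{prp:delta=3-triangle-free} gives the same conclusion. In $K_{m,2}$ all vertices of degree at least $3$ lie in $B$ and are pairwise nonadjacent. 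Each graph in Fig.~\ref{fig:forbidden}, however, contains two adjacent vertices of degree $3$.

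\textbf{The case $m\ge n\ge 3$.} Here we refine the protection strategy from the proof of Theorem~\ref{thm:complete_bipartite}.

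Blocker again fixes a vertex $v_n\in B$ different from Runner's starting edge and never lets it be visited. The point is that this costs nothing extra: Blocker deletes $u_iv_n$ only when Runner stands at $u_i$. In Runner's alternating trail, at most every second turn places her in $A$. The remaining Blocker turns are spent protecting vertices of $A$ one at a time. To protect a chosen unvisited $u\in A$, Blocker must delete its $n$ edges. His priority rule is as follows.
\begin{itemize}
\item If Runner stands at some $v_j\in B$ and the edge $v_ju$ is still present, Blocker deletes $v_ju$.
\item Otherwise he deletes any remaining edge of $u$, or the edge to $v_n$ when Runner is in $A$.
\end{itemize}

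The key counting claim is the following. Consider a block of $n+2$ consecutive Runner moves. In such a block Runner enters $A$ about $(n+2)/2$ times, hence visits at most about $(n+2)/2$ new vertices of $A$. Meanwhile Blocker's free turns suffice to finish one protection, with the obligatory deletions towards $v_n$ absorbed. Hence each protected vertex "consumes" at most $(n+2)/2$ vertices of $A$, or equivalently each $n+2$ units of $2m$ yield one protected vertex.

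Summing over the phases while unvisited unprotected vertices remain, with the starting vertex accounted for by the $+1$ in $2m+1$, we get at least $\lfloor (2m+1)/(n+2)\rfloor$ protected vertices of $A$. Together with $v_n$, this gives
\[
\sab(K_{m,n})\le m+n-1-\left\lfloor \tfrac{2m+1}{n+2}\right\rfloor+\text{(correction)}.
\]
We aim to arrange that the correction equals $+1$, which yields exactly the stated bound. Alternatively we may simply drop the $v_n$ protection if it does not pay off.

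\textbf{Main obstacle.} The main obstacle is the precise bookkeeping of the interleaving of Blocker's turns:
\begin{itemize}
\item deletions at $v_n$, which are needed when Runner is in $A$;
\item deletions for the current protected vertex, which are urgent when Runner is in $B$ adjacent to it;
\item parity effects at the first move, since Runner may start in $A$ or in $B$;
\item the last, incomplete phase.
\end{itemize}
We would first carry out the accounting with a potential function, for example $2\cdot(\#\text{visited }A)+(\#\text{deleted edges of protected vertices})$. We would check that it grows by at most $n+2$ per completed protection, and then handle the final partial phase by a case split on $(2m+1)\bmod (n+2)$, as in Cases 1--3 of Theorem~\ref{thm:complete_bipartite}.
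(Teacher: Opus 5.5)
Your treatment of $n=2$ is correct and matches the paper; your lower bound via $\Delta\ge 3$ works as well as the paper's appeal to the subgraph $K_{3,2}$. For $m\ge n\ge 3$, however, there is a genuine gap. Your key counting claim is false, and the bookkeeping that would settle the ``correction'' is never carried out.

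The problem is that protecting $v_n$ is not free. Whenever Runner enters a new vertex $u_i\in A$, the edge $u_iv_n$ is still present, so Blocker's next turn is forced to delete it. Hence for each new vertex of $A$ that Runner visits, Blocker keeps only one turn (the one while Runner stands in $B$) for protecting vertices of $A$, not two. In a block of $n+2$ Runner moves he therefore has only about $(n+2)/2<n$ free turns, which do not suffice to complete a protection. So the $v_n$ deletions are not ``absorbed''.

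Carried out honestly, this mixed strategy is essentially the one of Theorem~\ref{thm:complete_bipartite}. It leaves only about $(m-1)/n+1$ vertices unvisited. Since $1/n<2/(n+2)$ for $n\ge 3$, this falls below $\lfloor (2m+1)/(n+2)\rfloor$ for large $m$; for $n=3$, $m=50$ one gets $18$ versus $20$. Two further warning signs: if your count were right, it would prove the stated bound with an extra $-1$; and your potential $2\cdot\#(\text{visited }A)+\#(\text{deleted edges})$ cannot grow by at most $n+2$ per completed protection. The deleted-edge term alone contributes $n$, and Runner meanwhile visits about $n/2$ new vertices of $A$.

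The missing step is to commit to your fallback and drop $v_n$ altogether. Blocker spends every turn protecting vertices of $A$ with your priority rule, which guarantees that a vertex is never entered once Blocker has started on it. Then no potential function or case split is needed:
\begin{enumerate}
\item Suppose Runner visits $r$ vertices of $A\setminus\{u_1\}$. When she enters the $r$-th of them, Blocker has made at least $2r-1$ deletions, since Runner alternates sides. All of them are protection deletions, because an unvisited vertex he has not started on always exists, namely that $r$-th vertex.
\item Each protected vertex has exactly $n$ edges. So Blocker has started on at least $\lceil (2r-1)/n\rceil\ge\lfloor (2r+1)/n\rfloor$ vertices (this inequality uses $n\ge 3$). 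None of them is ever visited, so they are distinct from $u_1$ and the $r$ visited ones.
\item Thus $1+r+\lfloor (2r+1)/n\rfloor\le m$, whence $(2r+1)/n<m-r$. That is, $r\le\lceil (nm-1)/(n+2)\rceil-1=m-1-\lfloor (2m+1)/(n+2)\rfloor$.
\item Runner visits at most $u_1$, these $r$ vertices and all of $B$. So $\sab(K_{m,n})\le 1+r+n$, which is exactly the stated bound.
\end{enumerate}
This is the paper's argument. Your heuristic ``one protected vertex per $n+2$ units of $2m$'' is correct precisely for this strategy. The ``correction'' $+1$ is just the vertex $v_n$ you should not have tried to save.
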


\begin{proof}
Let $A=\{u_1,\ldots,u_m\}$ and $B=\{v_1,\ldots,v_n\}$ be the bipartition of $K_{m,n}$, where $m\ge n\ge 2$.

We first consider the case $n=2$. If $m=2$, then $\sab(K_{2,2})=\sab(C_4)=2$. On the other hand, if $m\ge 3$, then $K_{m,2}$ contains $K_{3,2}$ as a subgraph, and by Proposition~\ref{prp:delta=3-triangle-free} and Lemma~\ref{lem:subgraph}, $\sab(K_{m,2})\ge \sab(K_{3,2})=3$. For the reverse inequality, assume without loss of generality that Runner first traverses the edge $u_1v_1$. Blocker removes the edge $u_1v_2$. Since Runner is not allowed to traverse the edge $u_1v_1$ again, she cannot move back to $u_1$. Thus, she must move from $v_1$ to some unvisited vertex $u_i\in A$. Blocker then removes the edge $u_iv_2$. Since the edge $v_1u_i$ has already been traversed by Runner, she has no available edge left. Hence Runner
visits at most three vertices, and so $\sab(K_{m,2})=3$ for every $m\ge 3$.

Assume now that $n\ge 3$. Without loss of generality, Runner first traverses
the edge $u_1v_1$ (in some direction). Blocker chooses an unvisited vertex
$u_j\in A\setminus\{u_1\}$ and starts to protect it. Whenever it is Blocker's turn,
he removes an edge incident with $u_j$. If Runner is currently at a vertex
$v_k\in B$ and the edge $v_ku_j$ is still present, then Blocker removes precisely
this edge. Thus Runner cannot enter $u_j$ in her next move. Otherwise, Blocker
removes an arbitrary remaining edge incident with $u_j$. Blocker continues until all edges incident with $u_j$ have been removed. Notice that once Blocker starts protecting $u_j$, Runner can never visit $u_j$. After $u_j$ is isolated, Blocker chooses another unvisited vertex of $A$ and repeats the same strategy. In addition, if Runner ever enters a vertex of degree at most $2$, then Blocker removes the remaining available edge from that vertex, if such an edge exists. Since Runner is not allowed to traverse the edge by which she entered this vertex again, this ends the game.

Let $V$ be the set of vertices from $A\setminus\{u_1\}$ visited by Runner before
Blocker ever starts protecting them, and let $P$ be the set of vertices of $A$ which
Blocker starts protecting before Runner visits them. By Blocker's strategy described above, no vertex of $P$ is ever visited by Runner.

Suppose that $|V|=r$. Before Runner can add the first vertex to $V$, and since $n \ge 3$, Blocker has already made one move towards protecting some vertex of $A$. Moreover, each time
Runner visits a vertex of $V$ and later reaches a new vertex of $A$ again, Blocker gets two moves in between: one after Runner enters the vertex of $V$, and one after Runner leaves it. Hence, by the time Runner has created $r$ vertices in $V$, Blocker has had at least
$1+2r$ moves available for protecting vertices of $A$. Since one protected vertex of $A$ requires exactly $n$ deleted incident edges, we have
$$
|P|\ge \left\lfloor\frac{1+2r}{n}\right\rfloor .
$$
Therefore, by accounting the vertex $u_1 \in A$, which was visited at the very start of the game,
$$
1+r+\left\lfloor\frac{1+2r}{n}\right\rfloor \le 1 + |V| + |P| \le m.$$
It remains to determine the largest integer $r$ satisfying this inequality. The above inequality is equivalent to
$$\left\lfloor\frac{1+2r}{n}\right\rfloor \le m-1-r.$$
Since the right-hand side is an integer, it follows that
$$\frac{1+2r}{n}<m-r\,,$$
equivalently
$$r<\frac{nm-1}{n+2}.$$
Consequently, the largest possible integer $r_{\max}$ satisfying this inequality is
\begin{align*}
r_{\max} &=\left\lceil\frac{nm-1}{n+2}\right\rceil-1\\
        &=\left\lceil\frac{m(n+2)-(2m+1)}{n+2}\right\rceil-1\\
		&=\left\lceil m-\frac{2m+1}{n+2}\right\rceil -1\\
        &=m-\left\lfloor\frac{2m+1}{n+2}\right\rfloor-1\,.
\end{align*}
Runner can visit at most the initial vertex $u_1$, the vertices of $V$, and all $n$ vertices of $B$. Hence
$$\sab(K_{m,n}) \le 1+r_{\max}+n = m+n-\left\lfloor\frac{2m+1}{n+2}\right\rfloor.$$   
\end{proof}

For the upper bounds of Theorems~\ref{thm:complete_bipartite}
and~\ref{thm:complete_bipartite_new}, computer calculations indicate that the bound of Theorem~\ref{thm:complete_bipartite} is stronger when the two parts of the partition are of similar size, i.e., when $m$ is close to $n$. On the other hand, the bound of
Theorem~\ref{thm:complete_bipartite_new} becomes significantly stronger when $m$ is substantially larger than $n$. This is because the two bounds arise from different Blocker strategies, depending on whether Blocker focuses on protecting a vertex of the partition part $B$ or instead only protects vertices of the partition part $A$. Since the two bounds are independent of each other, we state the following combined upper bound.

\begin{corollary}
\label{cor:complete_bipartite_both}
If $m \ge n \ge 2$, then
$$\sab(K_{m,n}) \le m + n - \max\left\{\left\lceil\frac{m-1}{n}\right\rceil+1,\, \left\lfloor\frac{2m+1}{n+2}\right\rfloor \right\}.$$
\end{corollary}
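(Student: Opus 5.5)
This corollary combines the two upper bounds already proved, so the plan is simply to take whichever is smaller. By Theorem~\ref{thm:complete_bipartite}, for $m \ge n \ge 2$ we have
$$\sab(K_{m,n}) \le m+n-\left(\left\lceil\frac{m-1}{n}\right\rceil+1\right).$$
For $n\ge 3$, Theorem~\ref{thm:complete_bipartite_new} gives
$$\sab(K_{m,n}) \le m+n-\left\lfloor\frac{2m+1}{n+2}\right\rfloor.$$
Since $\sab(K_{m,n})$ is bounded above by both quantities, it is bounded by their minimum. The minimum equals $m+n$ minus the larger of the two subtracted terms, which is exactly the expression in Corollary~\ref{cor:complete_bipartite_both}.

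The only step needing care is $n=2$, because Theorem~\ref{thm:complete_bipartite_new} supplies only the exact value there, not the general inequality. In that case I will show directly that the first term of the maximum is never smaller than the second, so the maximum equals $\lceil (m-1)/2\rceil+1$ and Theorem~\ref{thm:complete_bipartite} alone suffices. Concretely, we need
$$\left\lfloor\frac{2m+1}{4}\right\rfloor \le \left\lceil\frac{m-1}{2}\right\rceil+1.$$
This follows from the chain
$$\frac{2m+1}{4} < \frac{m}{2}+1 \le \left\lceil\frac{m-1}{2}\right\rceil+\frac{3}{2}.$$
The left-hand side of the target is an integer strictly below the right-hand side plus $\tfrac12$, so the inequality holds.

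Alternatively, for $n=2$ one can note that the exact value $\sab(K_{m,2})\le 3$ from Theorem~\ref{thm:complete_bipartite_new} already lies below both expressions. Either route is routine, and I expect no real obstacle beyond keeping the $n=2$ case separate.
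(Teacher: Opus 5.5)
Your argument is correct and matches how the paper obtains the corollary: the paper states it without a separate proof, as the minimum of the bounds in Theorems~\ref{thm:complete_bipartite} and~\ref{thm:complete_bipartite_new}. Your check that $\lfloor (2m+1)/4\rfloor \le \lceil (m-1)/2\rceil+1$ when $n=2$ covers a case the paper passes over, since Theorem~\ref{thm:complete_bipartite_new} gives the general inequality only for $n\ge 3$. One caution on your alternative route: for $m=n=2$ the right-hand side equals $2$, so there you need the exact value $\sab(K_{2,2})=2$, not just $\sab(K_{m,2})\le 3$.
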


\subsection{(Generalized) Sierpi\'nski graphs}
\label{sec:sierpinski}

Let $G$ be a connected graph of order $n(G)=p$ with vertex set $V(G)=[p]_0$. For a positive integer $k$, the generalized Sierpi\'nski graph $S^k_G$ is the graph whose vertex set is $V(S^k_G)=[p]^k_0$. Thus every vertex of $S^k_G$ is a $k$-tuple, where each element is from $[p]_0$. Let $s=s_k\cdots s_1$ and $t=t_k\cdots t_1$ be two distinct vertices of $S^k_G$. Then $s$ and $t$ are adjacent if there exists an index $d \in [k]$ such that

\begin{enumerate}
\item $s_\delta=t_\delta$ for every $\delta>d$;
\item $s_d\neq t_d$ and $s_dt_d \in E(G)$;
\item $s_\delta=t_d$ and $t_\delta=s_d$ for every $\delta<d$.
\end{enumerate}

\noindent The edge set of $S^k_G$ can also be written in the following way:
$$E(S^k_G)=\left\{\{\underline{s}ij^{d-1},\,\underline{s}ji^{d-1}\} :\ d\in[k],\; \underline{s}\in[p]_0^{\,k-d},\; \{i,j\}\in E(G)\right\}.$$

Observe that $S_G^k$ contains $p^{k-1}$ copies of $G$. More precisely, for every
$\underline{s}\in [p]_0^{\,k-1}$, the set $\{\underline{s}i :\ i\in [p]_0\}$ induces a subgraph of $S_G^k$ isomorphic to $G$. We will denote this copy of $G$ by $\underline{s}G$. An edge of $S_G^k$ which does not lie in any $G$-copy will be called a \emph{linking edge}.

The Sierpi\'nski graph $S^k_p$ is defined as the generalized Sierpi\'nski graph whose base graph is the complete graph $K_p$. That is, $S^k_p=S^k_{K_p}$. The classical Sierpi\'nski graphs were introduced in~\cite{klavzar-1997}, see also the survey~\cite{hinz-2017}, and extended to the general case in~\cite{gravier-2011}, see also~\cite{menon-2025} and references therein. 

\begin{proposition}
\label{prp:generalized-sierpinski-bounds}
If $G$ is a connected graph and $k \geq 1$, then 
$$\sab(G) \leq \sab(S^k_G) \leq n(G)+1\, ,$$
and both bounds are sharp.
\end{proposition}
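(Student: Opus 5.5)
The lower bound $\sab(G)\le \sab(S^k_G)$ is immediate from Lemma~\ref{lem:subgraph}. For every $\underline{s}\in[p]_0^{\,k-1}$ the copy $\underline{s}G$ is a connected subgraph of $S^k_G$ isomorphic to $G$, so $\sab(G)=\sab(\underline{s}G)\le \sab(S^k_G)$. Sharpness of this bound is trivial, since $S^1_G=G$.

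For the upper bound I first establish a structural fact from the explicit description of $E(S^k_G)$: every vertex of $S^k_G$ is incident with at most one linking edge. Write a vertex as $\underline{s}ij^{d-1}$ with $d\ge 2$ maximal such that its last $d-1$ coordinates equal $j$ and $i\neq j$. Then the only linking edge that can be incident with it is $\{\underline{s}ij^{d-1},\underline{s}ji^{d-1}\}$, and it is present exactly when $ij\in E(G)$. Extreme vertices $i^k$ have no linking edge at all. Every edge inside a copy $\underline{s}G$ changes only the last coordinate, so all remaining edges at a vertex lie in its own copy.

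Blocker's strategy is then a "sealing" strategy, as in the proof of Theorem~\ref{thm:subcubic}: whenever Runner arrives at a vertex $w$, Blocker deletes the unique linking edge at $w$, if it exists and is not yet used or deleted; otherwise he deletes an arbitrary edge. I split into two cases according to Runner's first edge $uv$.

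\emph{Case 1: $uv$ lies in a copy $\underline{s}G$.} Runner can never leave $\underline{s}G$. At $v$ and at every later arrival the linking edge has been deleted before she can use it. This includes a possible return to $u$, since that also happens by an arrival. Hence she visits at most $p$ vertices.

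\emph{Case 2: $uv$ is a linking edge with $v\in\underline{s}'G$.} By the same argument Runner is confined to $\underline{s}'G$ after the first move. The only way back to $u$ would be the already traversed edge $uv$, because $v$ has no other linking edge. Hence she visits at most $1+p$ vertices.

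This gives $\sab(S^k_G)\le n(G)+1$.

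The main obstacle is sharpness of the upper bound: one must exhibit $G$ and $k$ with $\sab(S^k_G)=n(G)+1$. Many natural candidates fail. For paths and stars the interesting vertices are leaves or extreme vertices, and Theorem~\ref{thm:trees} gives small values. My first attempt is $G=K_4$ (more generally $K_p$) with $k=2$, with Runner starting on the linking edge $01\to 10$. In the copy $1K_4$ she should then play like the Runner strategy of Proposition~\ref{prp:path-cycle-complete}. The hope is that the extra linking edges at the non-extreme vertices $12,13$ force Blocker to spend deletions outside the copy, so that Runner visits all four vertices of $1K_4$ plus $01$, i.e.\ $5=n(G)+1$. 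I would verify this by a short case analysis of Blocker's first two deletions. If $K_4$ fails, I would search small bases (e.g.\ $K_4$ minus an edge, or $C_4$) by hand or computer for a graph where Runner, entering a copy through a linking edge, can traverse all of it.
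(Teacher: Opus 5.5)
Your lower bound, its sharpness via $S^1_G=G$, and the sealing proof of $\sab(S^k_G)\le n(G)+1$ are correct and follow the paper's argument. Your derivation that every vertex carries at most one linking edge is more explicit than the paper's.

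\textbf{The gap.} You do not establish sharpness of the upper bound: you exhibit no witness, and the one you propose fails. In $S^2_{K_4}$, after $01\to 10$, Blocker deletes $\{12,13\}$. Then:
\begin{itemize}
\item If Runner goes to $12$, Blocker deletes $\{12,21\}$, forcing $12\to 11$. He then deletes $\{11,13\}$, forcing $11\to 10$, and then $\{10,13\}$. Runner is stuck having visited $01,10,12,11$.
\item Going to $13$ is symmetric.
\item If Runner goes to $11$, Blocker deletes $\{11,12\}$, $\{13,31\}$, $\{10,12\}$ in turn, while her moves $11\to 13\to 10$ are forced.
\end{itemize}
Starts inside a copy give at most $4$ by your Case~1, and permuting labels acts transitively on directed linking edges. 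Hence $\sab(S^2_{K_4})=4=n(K_4)$. The culprit is the extreme vertex $11$: it has no linking edge, so Blocker can seal and still cut Runner off inside the copy.

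\textbf{The paper's citation does not rescue this.} The paper gets sharpness by citing Theorem~\ref{thm:sierpinski} for $S^k_p$ with $k,p\ge 3$. That theorem also asserts $\sab(S^2_4)=5$, which is refuted above. Its value $4$ for $p=3$, $k\ge 3$ is wrong as well. A start inside a copy gives at most $3$ by sealing. After an entry along a linking edge into a triangle $\{y,y',y''\}$, Blocker deletes $\{y',y''\}$ and then the linking edge at whichever of $y',y''$ Runner reaches. So $\sab(S^k_3)=3$.

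\textbf{A working witness.} You need $k\ge 3$, so that some copy has no extreme vertex. In $S^3_{K_4}$ every vertex of $K=01K_4$ has a linking edge, namely $\{010,001\}$, $\{011,100\}$, $\{012,021\}$, $\{013,031\}$, and these go to four distinct copies. For $v\in K$, write $v^*$ for its linking neighbor.
\begin{enumerate}
\item Runner plays $001\to 010$; let $e_1$ be Blocker's reply.
\item Runner moves to some $v\in\{011,012,013\}$ such that $e_1\notin\{\{010,v\},\{v,v^*\}\}$, $e_1$ is not a copy edge at $v^*$, and $v\in e_1$ whenever $e_1$ joins two of $011,012,013$. A single edge rules out at most one candidate, so such $v$ exists.
\item If Blocker's next deletion is not $\{v,v^*\}$, Runner crosses to $v^*$. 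Its three copy edges lead to unvisited vertices, and at most two of them can be deleted before she moves, so she reaches a fifth vertex.
\item If Blocker does delete $\{v,v^*\}$, let $u,w$ be the two vertices of $K\setminus\{010,v\}$. By the choice of $v$ they can be labelled so that $\{v,w\}$, $\{w,u\}$ and $\{w,w^*\}$ are all intact. Runner goes to $w$, and Blocker cannot delete both $\{w,u\}$ and $\{w,w^*\}$.
\end{enumerate}
Hence $\sab(S^3_{K_4})=5=n(K_4)+1$, and the same argument works in $0^{k-2}1K_4$ for every $k\ge 3$.

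The choice of $v$ is essential. If $e_1=\{012,013\}$ and Runner goes to $011$, Blocker deletes $\{011,100\}$. If Runner then moves to $012$, he deletes $\{012,021\}$ and, after the forced return to $010$, deletes $\{010,013\}$; the move to $013$ is symmetric. This holds her to four.
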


\begin{proof}
If $k=1$, then $S^1_G=G$, and therefore $\sab(S_G^1)=\sab(G)$. Hence the assertion holds in this case.

Assume now that $k\ge 2$. The lower bound follows directly from Lemma~\ref{lem:subgraph}, since
$G$ is an induced connected subgraph of $S_G^k$.

We now prove the upper bound. Let $V(G)=[p]_0$, where $p=n(G)$. Recall that, for every
$\underline{s}\in[p]_0^{k-1}$, the subgraph $\underline{s}G$ is an induced isomorphic copy of $G$ in $S_G^k$, and we have $p^{k-1} > 1$ such copies. By the definition of $S_G^k$, every vertex is incident with at most one linking edge.

Blocker uses the following strategy. Whenever Runner is currently at a vertex
of some copy $\underline{s}G$, Blocker removes the linking edge incident with
Runner's current vertex, provided that such an edge exists. If no such edge exists,
Blocker removes an arbitrary edge.

Assume first that Runner's first move is along an edge contained in some copy
$\underline{s}G$. Then, after this move, Runner is in $\underline{s}G$. By Blocker's
strategy, whenever Runner is at a vertex of $\underline{s}G$, the only possible linking edge by which Runner could leave $\underline{s}G$ is removed before she can use it. Hence Runner remains inside $\underline{s}G$ for the rest of the game. Therefore
Runner visits at most $n(G)$ vertices in this case.

Assume next that Runner's first move is along a linking edge. Then this edge
joins two distinct copies of $G$. After the first move, Runner is in one of these
copies, say $\underline{s}G$, while the other endpoint of the first edge is the only
vertex already visited outside $\underline{s}G$. From this point on, Blocker applies
the same strategy and prevents Runner from leaving $\underline{s}G$. Hence Runner
can visit at most all vertices of $\underline{s}G$, together with the one vertex
visited before entering $\underline{s}G$. Thus Runner visits at most $n(G)+1$
vertices.

Combining the two cases, Blocker can always ensure that Runner visits at most
$n(G)+1$ vertices. Therefore $\sab(S_G^k)\le n(G)+1$, which proves the assertion.

The lower bound is sharp for $k=1$, since $S_G^1=G$. Moreover, we can also find a family of graphs $G$ such that the lower bound is sharp for all $k \geq 1$. Let $G=K_{1,n}$, where $n\ge 3$. Since $G$ is a tree, also $S^k_G$ is a tree, and every complete binary subtree of $S^k_G$ has height $1$. Namely, every vertex of $S^k_{K_{1,n}}$ which has degree at least $3$ corresponds to the center of one of the induced copies of $K_{1,n}$, all its neighbors are leaves in their corresponding copies, and their degree outside of the copy they belong to is either $0$ or $1$ if they are the endvertex of a linking edge. On the other hand, $S^k_G$ contains a complete binary subtree of height $1$ rooted at a vertex of degree at least $3$.  Thus, by Theorem~\ref{thm:trees}, $\sab(S_G^k)=1+1+1=3$. Since also $\sab(G)=3$, we obtain $\sab(S_G^k)=\sab(G)$, showing that the lower bound is sharp.

The upper bound is sharp, e.g.\ for Sierpi\'nski graphs $S^k_p$, $k,p\ge 3$, as shown in Theorem~\ref{thm:sierpinski} below.
\end{proof}

\begin{theorem}
\label{thm:sierpinski}
If $S^k_p$, $k\ge 1$ and $p\ge 1$, is a Sierpi\'nski graph, then 
$$\sab(S^k_p) =  
\begin{cases}
p-1; & k=1 \mbox{ and } p \ge 3\,,\\[5pt]
p; & (k\ge 1 \mbox{ and } p \le 2) \mbox{ or } (k=2 \mbox{ and } p=3)\,,\\[5pt]
p+1; & (k=2 \mbox{ and } p \ge 4) \mbox{ or } (k\ge 3 \mbox{ and } p\ge 3)\,.
\end{cases}$$ 
\end{theorem}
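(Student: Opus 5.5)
I would split the proof along the three lines of the formula and use the earlier results as far as possible. For $k=1$ we have $S^1_p=K_p$, so Proposition~\ref{prp:path-cycle-complete} gives $p-1$ for $p\ge 3$. For $p\le 2$ the graph $S^k_p$ is a single vertex ($p=1$, where I take the value to be $p=1$ by convention) or a path on $2^k$ vertices ($p=2$). For the path Proposition~\ref{prp:path-cycle-complete} gives $2=p$. For $k=2$, $p=3$, the graph $S^2_3$ has nine vertices: three triangles joined by three linking edges. Its maximum degree is $3$, and it is subcubic and bridgeless. I would prove $\sab(S^2_3)=3$ by a direct Blocker argument. If Runner starts inside a triangle, Blocker cuts the linking edge at her vertex, and then the triangle argument of Theorem~\ref{thm:subcubic} traps her on a $3$-cycle. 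If she starts along a linking edge $ij\to ji$, she arrives at a vertex of degree $3$ in the triangle $jK_3$. Blocker deletes one of the two triangle edges at $ji$. She then moves to a vertex $jl$, and Blocker cuts the linking edge at $jl$. Her only remaining move is inside the triangle, to $jj$, which is an extreme vertex of degree $2$, so she stops. The count is $ij,ji,jl,jj$, which is $4$ vertices, so this naive line gives only $4$. I would therefore refine the response: after she arrives at $ji$, Blocker deletes the edge from $ji$ to the non-extreme vertex $jl$. She must then go to the extreme vertex $jj$, which has degree $2$, and Blocker deletes $jj\,jl$. She is stuck after visiting $3$ vertices. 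The lower bound of $3$ comes from $\Delta=3$.

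For the value $p+1$, the upper bound is Proposition~\ref{prp:generalized-sierpinski-bounds}. For the lower bound I need a Runner strategy visiting $p+1$ vertices. Runner starts along a linking edge into a copy $\underline{s}K_p$, arriving at a vertex $x$. From then on she plays a ``greedy'' complete-graph strategy, as in the lower-bound proof of Proposition~\ref{prp:path-cycle-complete}, inside that copy. She also exploits linking edges: a vertex of the copy that still has its linking edge available can serve as an escape to a fresh vertex. The aim is to show that Blocker, with one deletion per turn, cannot both keep her out of a vertex of the copy and block all exits. The counting would mirror the $K_n$ argument. The complete-graph strategy already yields $p-1$ vertices of the copy, and the starting vertex outside adds one more. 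The extra vertex must come from either reaching all $p$ vertices of the copy or leaving through a linking edge.

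The main obstacle is this final counting, and it is where the hypotheses $p\ge4$ (for $k=2$) and $k\ge3$ (for $p=3$) enter. When $k=2$, the copy $iK_p$ has exactly one extreme vertex $ii$, with no linking edge, and $p-1$ vertices with linking edges. When $k\ge 3$, Runner can pick a copy whose vertices all carry linking edges, namely a copy $\underline{s}K_p$ with $\underline{s}$ not constant. I would first try the following invariant. While Runner is at a vertex of the copy, the number of unvisited copy vertices plus the number of available exits from her vicinity exceeds what Blocker can destroy with his accumulated deletions. I would then check by hand the smallest cases, $S^2_4$ and $S^3_3$, to calibrate it, since $S^2_3$ shows the invariant fails with too few exits.
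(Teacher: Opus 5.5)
Your treatment of $k=1$, of $p\le 2$, and of $S^2_3$ is correct and matches the paper. Your refined reply at $ji$ is exactly the paper's.

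\textbf{The gap.} The proposal does not prove the lower bound $\sab(S^k_p)\ge p+1$ in the third line. You describe an invariant you would try, but give no Runner strategy and no count. Worse, the two hand checks you plan, $S^3_3$ and $S^2_4$, refute that bound rather than calibrate it.

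\emph{The case $p=3$.} In $S^k_3$, $k\ge 2$, every vertex lies in exactly one triangle and has at most one further (linking) edge.
If Runner opens with a linking edge $u\to v$, Blocker deletes the triangle edge opposite $v$. Whichever triangle-mate she then enters can be left only by its linking edge, and Blocker deletes it next.
If she opens along an edge $u\to v$ of a triangle $uvw$, Blocker deletes the linking edges at $v$, $w$, $u$ (when they exist) as she reaches them, so she is confined to the triangle.
Hence $\sab(S^k_3)=3$ for every $k\ge 2$, not $4$ for $k\ge 3$.

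\emph{The case $k=2$, $p\ge 4$.} An opening inside a copy is handled as in Proposition~\ref{prp:generalized-sierpinski-bounds}. After an opening $10\to 01$ into $K=0K_p$, Blocker plays as follows:
(i) delete the linking edge at Runner's vertex if it is still usable;
(ii) otherwise delete the linking edge of an unvisited vertex of $K$ that still has one;
(iii) once neither applies, fix an unvisited $z\in V(K)$ and from then on always delete the edge from Runner's vertex to $z$.
Runner never leaves $K$. After every move of type (i) or (ii), at least two unvisited vertices of $K$ lack a usable linking edge. Initially these are $00$ and the vertex cut by Blocker's first move. Each time she steps onto such a vertex, rule (ii) restores the count unless (iii) takes over. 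So (iii) always starts with some $z$ available, and she visits at most $1+(p-1)=p$ vertices. Thus $\sab(S^2_p)\le p$ for all $p\ge 3$. In particular $\sab(S^2_4)=4$, not $5$; for your hand check, let Blocker open by deleting $\{02,20\}$.

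\textbf{The paper's proof has the same hole.} It lets Runner ``stay inside $K$ and move to a new vertex of $K$'' whenever her linking edge is cut. It also asserts that a copy $K'$ she escapes into has lost at most one edge. Both claims ignore Blocker's free moves: his first move, and every move made while Runner stands on a vertex with no usable linking edge (such as $00$ or a pre-cut vertex). Blocker can spend these moves pre-cutting linking edges when $k=2$, or deleting the edge of $K$ opposite the entry vertex when $p=3$. So no invariant of the kind you sketch exists in these cases, and the statement itself must be corrected there.

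\textbf{Where your plan may still work.} It can plausibly be completed only for $k\ge 3$, $p\ge 4$. There Runner can enter a copy with no extreme vertex, which leaves Blocker a single free move. Since $K_p$ minus an edge has a Hamiltonian path from every vertex, she can walk such a path, leaving any pre-cut vertex for last and escaping whenever a linking edge survives. The escape case still needs a careful count. For example, she should choose her second vertex so that its linking edge does not lead into a copy already damaged by Blocker's first move.
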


\begin{proof}
Assume first that $k=1$ and $p\ge 3$. Then $S_p^1=K_p$, and by Proposition~\ref{prp:path-cycle-complete} we have $\sab(S_p^1)=\sab(K_p)=p-1$.

Next we have the case $k \geq 1$ and $p\le 2$. Then $S^k_p$ is either an edge-less graph for $p=1$ or a path for $p=2$. Hence $\sab(S_p^k)=p$.

Let now $k=2$ and $p=3$. We claim that $\sab(S_3^2)=3$. Runner can clearly visit at least three vertices, since she can start with a linking edge and then move inside the copy of $K_3$ she enters. For the reverse inequality, Blocker uses the following strategy. If Runner starts inside some copy $iK_3$, then Blocker removes the linking edge incident with Runner's current vertex, if such an edge exists (otherwise he removes an arbitrary edge). Continuing in this way, Runner cannot leave this copy and hence visits at most three vertices. Assume now that Runner starts with a linking edge, say from $ij$ to $ji$, where $i,j \in [3]_0$ with $i\ne j$. Let $h$ be the remaining element of $[3]_0$. Blocker removes the edge $\{ji,jh\}$ inside the copy $jK_3$. Then Runner has to move to $jj$ and Blocker removes the edge $\{jj,jh\}$. Runner cannot visit a fourth vertex, and hence $\sab(S_3^2)=3$.

It remains to consider the cases $k=2,\ p\ge 4$ and $k\ge 3,\ p\ge 3$. By Proposition~\ref{prp:generalized-sierpinski-bounds}, we already know that $\sab(S_p^k)\le p+1$. In the remainder of the proof we show that Runner can visit at least $p+1$ vertices.

First assume that $k=2$ and $p\ge 4$. Let $K=0K_p$. In this copy, every vertex except $00$ is incident with a linking edge. Runner starts by traversing a linking edge towards $K$, say the edge $\{10,01\}$. Thus she has visited one vertex outside $K$ and one vertex of $K$. In her next move we can without loss of generality assume she moves inside $K$ to a vertex different from $00$ and $01$. This is possible because $p\ge 4$ and Blocker has removed at most one edge up to this point. Thus Runner has visited one vertex outside $K$ and two vertices of $K$, both vertices of $K$ being incident with linking edges. From now on Runner proceeds as follows. Whenever she is at a vertex of $K$ incident with a linking edge, she checks whether Blocker has removed this linking edge. If Blocker has removed it, then Runner stays inside $K$ and moves to a new vertex of $K$, always leaving the vertex $00$ for last. If Blocker has not removed the linking edge, Runner traverses it and enters another copy $K'$ of $K_p$. If Blocker always removes the linking edge incident with Runner's current vertex, then Runner remains inside $K$ and visits all vertices of $K$, with $00$ visited last. Together with the first vertex outside $K$, this gives $p+1$ visited vertices. Otherwise, Runner traverses an unremoved linking edge and enters another copy $K'$. Before entering $K'$, she has already visited at least three vertices. Up to this moment Blocker has only removed linking edges incident with vertices of $K$, except possibly in his last move. Hence, inside $K'$, at most one edge has been removed. Runner now stays inside $K'$. Since $K'$ is a complete graph with at most one edge removed, Runner can visit at least $p-2$ vertices of $K'$ ($K_{p-1}$ is a connected subgraph of $K_p$ minus an edge, so we can use Lemma~\ref{lem:subgraph}). Therefore she visits at least
$3+(p-2)=p+1$ vertices in total.

Finally assume that $k\ge 3$ and $p\ge 3$. Let $K=0^{k-2}1K_p$. Every vertex of $K$ is incident with a linking edge. Indeed, if $i\ne 1$, then $0^{k-2}1i$ is joined by a linking edge to $0^{k-2}i1$, while $0^{k-2}11$ is
joined by a linking edge to $0^{k-3}100$. Runner starts by traversing a linking edge towards $K$. Then she makes one move inside $K$ to a new vertex of $K$, which is possible since $p\ge 3$ (and Blocker has only removed one edge up to this point). From this point on she uses the same strategy as described in the previous case: if Blocker removes the linking edge incident with her current vertex, she stays inside $K$ and visits a new vertex of $K$; if Blocker does not remove this linking edge, she traverses it and enters another copy $K'$ of $K_p$. If Blocker always removes the relevant linking edge, then Runner visits all vertices of $K$, and together with the first vertex outside $K$ she visits $p+1$ vertices. Otherwise, Runner enters another copy $K'$ after having already visited at least three vertices. As above, at most one edge of $K'$ has been removed before Runner starts playing inside it, and hence she can visit at least $p-2$ vertices of $K'$. Thus she again visits at least $3+(p-2)=p+1$ vertices.

Consequently, $\sab(S_p^k)\ge p+1$ holds in both cases, and together with Proposition~\ref{prp:generalized-sierpinski-bounds}, this yields $\sab(S_p^k)=p+1$.
\end{proof}

\section{Concluding remarks}

In this paper, we introduced a graph-based version of the sabotage game. The game is generally very challenging, and many questions remain unanswered. Let us list a few selected open problems. 

For unicyclic graphs we have shown that the bounds in Theorem \ref{thm:unicyclic} are sharp. Since the difference between the bounds is exactly one, we pose:

\begin{problem}
Characterize unicyclic graphs which achieve the lower (upper) bound specified in Theorem \ref{thm:unicyclic}.
\end{problem}

To provide sharpness examples in Theorem~\ref{thm:subcubic}, we have demonstrated that $\sab(P_2\cp P_n) = 4$ for  $n\ge 2$. In this direction we pose:

\begin{problem}
Determine $\sab(P_m\cp P_n)$ for all $m, n\ge 2$. If that is too difficult, prove a lower bound and an upper bound on $\sab(P_n\cp P_m)$ that are close to each other.      
\end{problem}

The exact value of the sabotage number of complete bipartite graphs remains unknown. Nevertheless, all smaller cases considered so far suggest that the upper bound from Corollary~\ref{cor:complete_bipartite_both} may in fact be exact. This leads to the following problem.

\begin{problem}
Determine the exact value of the sabotage number of complete bipartite graphs. In particular, is it true that for every $m \ge n \ge 2$,
$$\sab(K_{m,n})=m+n-\max\left\{\left\lceil\frac{m-1}{n}\right\rceil+1,\,\left\lfloor\frac{2m+1}{n+2}\right\rfloor\right\}?$$
\end{problem}

\section*{Acknowledgements}

M.J., S.K., A.T.\ were supported by the Slovenian Research and Innovation Agency (ARIS) under the grants  P1-0297, N1-0285, N1-0355, N1-0431, J1-70045. M.M.\ acknowledges the financial support of the Ministry of Science, Technological Development and Innovation of the Republic of Serbia (Grants No.\ 451-03-33/2026-03/ 200125 \& 451-03-34/2026-03/ 200125). A lot of work on this paper has been done during the Workshop on Games on Graphs IV, June 2026, Rogla, Slovenia, the authors thank the Institute of Mathematics, Physics and Mechanics, Ljubljana, Slovenia, and the Faculty of Natural Sciences and Mathematics, University of Maribor, Slovenia, for supporting the workshop. 

\section*{Declaration of interests}
 
The authors declare that they have no conflict of interest. 

\section*{Data availability}
 
Our manuscript has no associated data.

\end{document}